\numberwithin{equation}{subsection}
\DeclareMathOperator{\Flag}{Flag}
\DeclareMathOperator{\Grass}{Grass}
\DeclareMathOperator{\defect}{def}
\DeclareMathOperator{\supp}{supp}
\theoremstyle{plain}
\newtheorem{theorem}{Theorem}[subsection]
\newtheorem{lemma}[theorem]{Lemma}
\newtheorem{cor}[theorem]{Corollary}
\newtheorem{defn}[theorem]{Definition}
\newtheorem{prop}[theorem]{Proposition}
\newtheorem{conj}[theorem]{Conjecture}
\theoremstyle{definition}
\newtheorem{example}[theorem]{Example}
\def\ge{\geqslant}
\def\le{\leqslant}
\def\a{\alpha}
\def\g{\gamma}
\def\d{\delta}
\def\e{\epsilon}
\def\s{\sigma}
\def\t{\tau}
\def\th{\theta}
\def\k{\kappa}
\def\l{\lambda}
\def\i{^{-1}}
\def\NN{\mathbb N}
\def\co{\mathcal O}
\def\tW{\tilde W}
\def\tw{\tilde w}
\def\tS{\tilde S}
\newcommand{\kk}{\Bbbk}
\def\<{\langle}
\def\>{\rangle}
\begin{document}

\title[Dimensions of affine Deligne-Lusztig varieties]
{Dimensions of affine Deligne-Lusztig\\varieties in affine flag varieties}

\author[U. G\"{o}rtz]{Ulrich G\"{o}rtz}
\address{Ulrich G\"{o}rtz\\Institut f\"ur Experimentelle Mathematik\\Universit\"at Duisburg-Essen\\Ellernstr.~29\\45326 Essen\\Germany}
\email{ulrich.goertz@uni-due.de}
\thanks{G\"{o}rtz was partially supported by a Heisenberg grant and by the Sonderforschungsbereich TR 45 ``Periods, Moduli spaces and Arithmetic of Algebraic Varieties'' of the Deutsche Forschungsgemeinschaft.}
 
\author[X. He]{Xuhua He}
\address{Xuhua He, Department of Mathematics, The Hong Kong University of Science and Technology, Clear Water Bay, Kowloon, Hong Kong}
\thanks{Xuhua He was partially supported by HKRGC grants 601409.}
\email{maxhhe@ust.hk}

\begin{abstract} 
Affine Deligne-Lusztig varieties are analogs of Deligne-Lusztig varieties in the context of an affine root system. We prove a conjecture stated in the paper \cite{GHKR2} by Haines, Kottwitz, Reuman, and the first named author, about the question which affine Deligne-Lusztig varieties (for a split group and a basic $\sigma$-conjugacy class) in the Iwahori case are non-empty. If the underlying algebraic group is a classical group and the chosen basic $\sigma$-conjugacy class is the class of $b=1$, we also prove the dimension formula predicted in op.~cit.~in almost all cases.
\end{abstract}

\maketitle

\section{Introduction}

\subsection{}
Affine Deligne-Lusztig varieties, which are analogs of usual De\-li\-gne-Lusztig varieties~\cite{DL} in the context of an affine root system, have been studied by several people, mainly because they encode interesting information about the reduction of Shimura varieties and specifically about the relation between the ``Newton stratification'' and the ``Kottwitz-Rapoport stratification''. Their definition is purely group-theoretical. To recall it, we fix a split connected reductive group over the finite field $\mathbb F_q$ with $q$ elements.  Let $\kk$ be an algebraic closure of $\mathbb F_q$, let $L=\kk((\e))$ be the field of formal Laurent series over $\kk$, and let $\s$ be the automorphism of $L$ defined by $\s(\sum a_n \e^n)=\sum a_n^q \e^n$. We also denote the induced automorphism on the loop group $G(L)$ by $\s$. Let $T\subset G$ be a split maximal torus, and denote by $W$ the corresponding Weyl group. Furthermore, let $I\subset G(\kk[ [\e]])$ be an Iwahori subgroup containing $T(\kk[ [\e ]])$, and let  $\tW$ be the extended affine Weyl group attached to these data. See Section~\ref{sec:notation} for details.

For $x\in\tW$ and $b\in G(L)$ the locally closed subscheme
\[
X_x(b) = \{ g\in G(L)/I;\ g^{-1}b\s(g) \in IxI \}
\]
of the affine flag variety $G(L)/I$ is called the \emph{affine Deligne-Lusztig variety} attached to $b$ and $x$. The $X_x(b)$ are known to be finite-dimensional varieties (locally of finite type over $\kk$), but are possibly empty, and it is not in general easy to check whether $X_x(b)=\emptyset$ for a given pair $x$, $b$.

In~\cite[Conjecture~9.5.1~(a)]{GHKR2} , Haines, Kottwitz, Reuman and the first named author have stated the following conjecture, which extends a conjecture formulated earlier by Reuman. For simplicity, let us assume that $G$ is quasi-simple of adjoint type.

We denote by $\tW'$ the lowest two-sided cell in the sense of Kazhdan and Lusztig. In the terminology of \cite{GHKR2}, this is the union of the shrunken Weyl chambers. See Section~\ref{sec:defW'}. The notion of \emph{basic} $\sigma$-conjugacy class can be characterized by saying that it contains an element of $N(T)(L)$ which gives rise to a length $0$ element of $\tW = N(T)(L)/T(\kk[ [\e]])$. Equivalently, a $\sigma$-conjugacy class is basic if and only if its Newton vector is central. See~\cite[Lemma 7.2.1]{GHKR2}. The $\sigma$-conjugacy class of $b=1$ is always basic.

\begin{conj}
Suppose that the $\sigma$-conjugacy class of $b$ is basic, and that $x\in \tW'$. If $b$ and $x$ are in the same connected component of $G(L)$ and
\[
\eta(x) \in W\setminus \bigcup_{T\subsetneq S} W_T,
\]
then $X_x(b)\ne \emptyset$ and
\[
\dim X_x(b) = \frac12 \left( \ell(x) + \ell(\eta(x)) - {\rm def}_G(b)  \right).
\]
\end{conj}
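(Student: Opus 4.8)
The plan is to prove the non-emptiness assertion and the dimension formula at the same time, transporting both up from explicit base cases along a Deligne-Lusztig type reduction, after first normalizing $b$. Since $G$ is quasi-simple of adjoint type, every connected component of $G(L)$ contains exactly one basic $\sigma$-conjugacy class, so the hypotheses pin $[b]$ down uniquely, and we may take $b=\tau$ for a fixed length-zero element $\tau\in\tW$ representing the component $\kappa_G(x)$ (so $b=1$ when that component is trivial). In particular $\mathrm{def}_G(b)$ is now a fixed constant, and what is left is to understand $X_x(b)$ as $x$ ranges over $\tW'$.

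The engine is the reduction step: for any $x\in\tW$ and any simple affine reflection $s$ (recall $\sigma$ acts trivially on $\tW$, since $G$ is split), if $\ell(sxs)=\ell(x)$ then $X_x(b)\cong X_{sxs}(b)$, while if $\ell(sxs)=\ell(x)-2$ then $X_x(b)$ is covered by two locally closed subvarieties, one mapping to $X_{sx}(b)$ and one to $X_{sxs}(b)$, all fibers of dimension $1$; so $X_x(b)\ne\emptyset$ iff $X_{sx}(b)\ne\emptyset$ or $X_{sxs}(b)\ne\emptyset$, and $\dim X_x(b)=1+\max\{\dim X_{sx}(b),\dim X_{sxs}(b)\}$. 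Iterating, and recursing into both branches $sx$ and $sxs$ whenever the length drops, one joins any $x$ through a finite branching tree to leaves $y$ that are of minimal length in their $\tW$-conjugacy classes. For such a $y$, in a class $\mathcal O$ with Newton vector $\bar\nu_{\mathcal O}$, one knows that $X_y(b)\ne\emptyset$ precisely when $[b]$ is the $\sigma$-conjugacy class attached to $(\bar\nu_{\mathcal O},\kappa(\mathcal O))$, and then $X_y(b)$ is a disjoint union of classical Deligne-Lusztig varieties in the flag variety of a reductive group, all of dimension $\ell(y)-\langle\bar\nu_{\mathcal O},2\rho\rangle$. Assembling the tree: $X_x(b)\ne\emptyset$ iff some leaf $y$ has $[\mathcal O_y]=[b]$, and then $\dim X_x(b)=\max_y\bigl(\tfrac12(\ell(x)+\ell(y))-\langle\bar\nu_{\mathcal O_y},2\rho\rangle\bigr)$, the maximum over those leaves.

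The theorem has now become a combinatorial statement inside $\tW$, and proving it for $x\in\tW'$ with $\eta(x)\notin\bigcup_{T\subsetneq S}W_T$ is the crux; it falls into two halves. First, the reduction tree of such an $x$ must contain a leaf $y$ with $[\mathcal O_y]=[b]$: for $b=1$ this should amount to steering the reduction onto the $\tW$-conjugacy class of $\eta(x)$ itself, which has trivial Newton vector (as $\eta(x)\in W$) and whose minimal-length elements have length $\ell(\eta(x))$. The full-support condition on $\eta(x)$ is exactly what makes this possible: morally it says that $x$ is of $P$-alcove type for no proper parabolic $P$, so there is no Levi-type emptiness obstruction to remove, but also no Levi-reduction to fall back on, and one has to follow the Newton vectors along reductions that stay inside the shrunken chambers and build the leaf by hand. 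This is the step I expect to be the main obstacle. Second, the numerology must close up: for the optimal leaf one needs $\tfrac12(\ell(x)+\ell(y))-\langle\bar\nu_b,2\rho\rangle=\tfrac12(\ell(x)+\ell(\eta(x))-\mathrm{def}_G(b))$, together with the fact that no leaf overshoots this value; the identity is a length computation relating $\ell(y)$, $\ell(\eta(x))$ and $\mathrm{def}_G(b)$, and the no-overshoot part is the statement that the predicted value is an upper bound for $\dim X_x(b)$, provable by the same reduction by induction on $\ell(x)$ once one checks that it drops by exactly $1$ under a length-dropping step.

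I do not expect the combinatorial verification of the dimension statement to go through uniformly for every quasi-simple $G$; one should only expect the clean outcome for $b=1$ with $G$ classical, up to finitely many small-rank exceptions. The plan there is to use the $P$-alcove formalism, together with passage to $\sigma$-conjugates and to standard Levi subgroups, to reduce to a bounded list of configurations, and then carry out the analysis of conjugacy classes, Newton vectors and the length identity root system by root system for $A_n$, $B_n$, $C_n$, $D_n$; the sporadic failures are precisely the configurations in which the reduction tree is too short for the inductive dimension bookkeeping to be sharp. The non-emptiness assertion, which needs only the first of the two combinatorial halves and no fine dimension count, should come out in full generality.
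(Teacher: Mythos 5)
Your high-level architecture---normalize $b$ to a length-zero element, run the Deligne--Lusztig reduction as a branching tree into $sx$ and $sxs$, terminate at minimal-length elements of $\tW$-conjugacy classes, and read off non-emptiness and dimension from the leaves---is the same philosophy the paper uses, but the proposal leaves the hard part untouched and also invokes a theorem the paper does not have available.

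The central gap is exactly the step you flag as ``the main obstacle'': producing a reduction path from a given $x\in\tW'$ with $\supp(\eta(x))=S$ down to a suitable leaf, and controlling the virtual dimension $d(\cdot)=\tfrac12(\ell+\ell\circ\eta-\defect)$ along the way. This is the actual content of Theorem~\ref{thm3}, and it requires genuine new combinatorics: a decomposition $x=x_1x_2$ with $\ell(x)=\ell(x_1)+\ell(x_2)$ built from the data $J=\{i:s_iw<w\}$, $J'=I(\mu-\rho_J^\vee)$, and the Demazure product $a=(y^{-1}z)*(w'y)$, followed by a diagram chase through the varieties $X_1,\dots,X_5$ to get $x\Rightarrow at^\gamma$; then a further argument (using that $G$ is quasi-simple, so no simple reflection commutes with all the others) to land on $t^\mu c$ with $c$ Coxeter; and finally He's result that $x\tilde\to\eta(x)$ when $\eta(x)$ is Coxeter. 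None of this is sketched, and saying one must ``follow the Newton vectors and build the leaf by hand'' does not circumvent it. Your plan to do the lower bound ``root system by root system'' is also not what makes the argument go---the paper's treatment is largely uniform for classical groups, with the classical/$A_n$ restriction entering only through the availability of He's Coxeter-reduction result.

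Separately, you are using a result that is strictly stronger than what was in the literature at the time: that for $y$ of minimal length in a $\tW$-conjugacy class $\mathcal O$, the variety $X_y(b)$ is non-empty iff $[b]$ matches $(\bar\nu_{\mathcal O},\kappa(\mathcal O))$ and is then a disjoint union of classical Deligne--Lusztig varieties of dimension $\ell(y)-\langle\bar\nu_{\mathcal O},2\rho\rangle$. The paper does not assume this; it uses only the non-emptiness of $X_{t^\mu w}(b)$ for $w$ cuspidal (a lemma from \cite{GHKR2}) plus a specific computation $\dim X_{w\tau}(\tau)=\ell(w)$ for $w\in W_J$. Also, for the upper bound the paper does not run a length induction on the reduction tree: it first proves $w_0t^\mu\Rightarrow x$ (when $\eta_2(x)=w_0$ or $\mu$ regular, which is why the theorem carries that hypothesis) and then bounds $\dim X_{w_0t^\mu}(b)$ by projecting to the affine Grassmannian and using the known dimension formula there. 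Your proposed induction would need a base case and a guarantee that one never leaves the region where the bookkeeping is controlled; the Grassmannian route avoids both issues and is genuinely different. Finally, your claim that the open branch $X_2\to X_{sx}(b)$ drops $d$ ``by exactly $1$'' is wrong: the paper's Lemma~\ref{Lemma-VirtualDim2'} only gives $d(x)\ge d(sx)+1$, with equality iff $\ell(\eta(sx))=\ell(\eta(x))-1$, and the strict case is precisely what produces the non-equidimensional example in Section~\ref{Example-non-equidim}.
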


Here $S$ is the set of simple reflections, and for any $T\subset S$, $W_T$ denotes the subgroup of $W$ generated by $T$. Furthermore, $\eta$ is the map $\tW\to W$ given as follows: If $x = vt^\mu w$ with $v,w\in W$ and such that the alcove $t^\mu w$ lies in the dominant chamber, then $\eta(x) = wv$. See Section~\ref{sec:outline}. Finally, ${\rm def}_G(b)$ is the defect of $b$, see \cite{Kottwitz-defect}.

A strengthened version of the converse of the non-emptiness statement was proved in \cite[Proposition 9.5.4]{GHKR2}. Here we prove
\begin{theorem}
Suppose that the $\sigma$-conjugacy class of $b$ is basic, and that $x\in \tW'$. Write $x=vt^\mu w$ as above. Assume that $b$ and $x$ are in the same connected component of $G(L)$ and that 
\[
\eta(x) \in W\setminus \bigcup_{T\subsetneq S} W_T.
\]
\begin{enumerate}
\item
Then $X_x(b)\ne \emptyset$.
\item
If $v=w_0$ or $\mu$ is regular, then
\[
\dim X_x(b) \le \frac12 \left( \ell(x) + \ell(\eta(x)) - {\rm def}_G(b)  \right).
\]
\item
If $G$ is a classical group and $b=1$, or of type $A_n$ and $b$ arbitrary basic, then
\[
\dim X_x(b) \ge \frac12 \left( \ell(x) + \ell(\eta(x)) - {\rm def}_G(b)  \right).
\]
\end{enumerate}
\end{theorem}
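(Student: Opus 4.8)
The plan is to derive all three assertions from the \emph{Deligne-Lusztig reduction} for affine Deligne-Lusztig varieties exploited in \cite{GHKR2}: if $s$ is a simple affine reflection with $sx<x$, then either $\ell(sxs)=\ell(x)$ and $X_x(b)\cong X_{sxs}(b)$, or $\ell(sxs)=\ell(x)-2$ and $X_x(b)$ is the union of a closed subscheme isomorphic to $X_{sx}(b)$ and an open subscheme fibered over $X_{sxs}(b)$ with all fibers isomorphic to $\mathbb{A}^1$; hence non-emptiness propagates along such moves, and in the latter case $\dim X_x(b)=\max\{\dim X_{sx}(b),\,\dim X_{sxs}(b)+1\}$. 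The first and most delicate task, purely combinatorial inside $\tW$, is to show that from $x=vt^\mu w\in\tW'$ with $\eta(x)\notin\bigcup_{T\subsetneq S}W_T$ one can run a chain of such reductions --- length-preserving cyclic shifts followed by length-decreasing moves, in the spirit of minimal-length elements of conjugacy classes in $\tW$ --- down to an explicit element $x'$, keeping track throughout of the connected component of $x$, of the property that $\eta$ avoids every proper $W_T$, and, crucially, of the \emph{virtual dimension} $\tfrac12\bigl(\ell(\cdot)+\ell(\eta(\cdot))-\defect_G(b)\bigr)$, which should be preserved by the length-preserving moves (as is $X_x(b)$ itself) and should drop by exactly $1$ at each length-decreasing move.

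Granting this, part (1) reduces to $X_{x'}(b)\neq\emptyset$ for the terminal element. The hypothesis $\eta(x)\notin\bigcup_{T\subsetneq S}W_T$ prevents $x'$ from being a proper $P$-alcove in the sense of \cite{GHKR2}, so the emptiness obstruction found there is absent, and $X_{x'}(b)$ can be analysed directly --- as an iterated affine-space bundle over a (nonempty) finite Deligne-Lusztig variety, or via the projection $G(L)/I\to G(L)/G(\kk[ [\e]])$ as a bundle over an affine Deligne-Lusztig variety in the affine Grassmannian. Part (2): once (1) holds, the upper bound is the strengthened converse \cite[Prop.~9.5.4]{GHKR2}, whose combinatorial estimate, under the extra hypothesis $v=w_0$ or $\mu$ regular, gives exactly $\dim X_x(b)\le\tfrac12(\ell(x)+\ell(\eta(x))-\defect_G(b))$; one should also check that this estimate is stable under the reduction moves, so that it is enough to verify it at $x'$.

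Part (3), the lower bound, is the main obstacle. Here I would propagate a lower bound \emph{backwards} along a reduction chain chosen so as never to leave the ``$\mathbb{A}^1$-fibered'' branch over $X_{sxs}(b)$; then each move raises both $\dim X_x(b)$ and the virtual dimension by $1$, and it suffices to prove $\dim X_{x'}(b)\ge\tfrac12(\ell(x')+\ell(\eta(x'))-\defect_G(b))$ at the terminal element. There $X_{x'}(b)$ is built by affine-space bundles from finite Deligne-Lusztig varieties and from an affine Deligne-Lusztig variety $X_{\mu'}(b)$ in the affine Grassmannian of $G$ or of a Levi subgroup, whose dimension equals $\langle\rho,\mu'-\nu_b\rangle-\tfrac12\defect_G(b)$ by the dimension formula for affine Grassmannians, available for classical $G$ by work of Viehmann and of Haines, Kottwitz, Reuman and the first named author --- which is precisely the hypothesis in (3); for type $A_n$ with arbitrary basic $b$ one instead uses Viehmann's computation of $\dim X_{\mu'}(b)$ for inner forms of $GL_n$, where the formula holds unconditionally. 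The genuine difficulty is to choose the reduction chain so that the generic branch stays nonempty of the expected dimension throughout --- equivalently, to prove that $\tfrac12\bigl(\ell(\cdot)+\ell(\eta(\cdot))-\defect_G(b)\bigr)$ is additive along a full reduction path down to $x'$ and agrees with the endpoint computation. This is exactly where classicality (or type $A$) is indispensable, and where I expect most of the case analysis to reside.
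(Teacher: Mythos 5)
Your overall strategy --- Deligne--Lusztig reduction together with careful tracking of the virtual dimension $d(x)=\tfrac12\bigl(\ell(x)+\ell(\eta(x))-\defect_G(b)\bigr)$ --- is indeed the paper's strategy, formalized there by the relation $x\Rightarrow y$. But several of your concrete assertions about where the reduction lands and how the endpoint is analysed do not match the paper and in some places would not work as stated.

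For part (2), the upper bound is \emph{not} extracted from \cite[Prop.~9.5.4]{GHKR2}: that proposition is the converse emptiness criterion (a $P$-alcove statement) and gives no dimension estimate. The actual argument first proves $w_0 t^\mu\Rightarrow vt^\mu w$ (possible precisely when $v=w_0$ or $\mu$ is regular), and then bounds $\dim X_{w_0 t^\mu}(b)$ by comparing with the affine Grassmannian via $\pi^{-1}(X_\mu(b))=\bigcup_{x\in Wt^\mu W}X_x(b)$ and the dimension formula of G\"ortz--Haines--Kottwitz--Reuman and Viehmann. So the Grassmannian formula enters the upper bound, not (as in your sketch) only the lower bound.

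For part (1), the terminal element is not ``analysed directly as an iterated affine-space bundle over a finite Deligne--Lusztig variety or via the projection to the affine Grassmannian.'' The chain ends at an element of the form $t^\mu c$ with $c$ a Coxeter (hence cuspidal) element of $W$, and non-emptiness there comes from the fact that any $t^\mu w$ with $w$ cuspidal is already $\sigma$-conjugate to a basic $b$ with the same $\kappa$-invariant (\cite[Lemma~9.3.3]{GHKR2}). The $P$-alcove machinery plays no role at the terminal step in the paper.

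For part (3), your prescription of staying on the ``$\mathbb{A}^1$-fibered branch over $X_{sxs}(b)$'' is not what the reduction chain actually does, and would not deliver the needed drop in $d$. The paper uses the other, open branch fibered over $X_{sx}(b)$ at the steps where $\ell(\eta(sx))=\ell(\eta(x))-1$, since only then is $d(x)-d(sx)=1$; the closed branch governs the length-preserving cyclic shifts and the $\ell(sxs)=\ell(x)-2$ drops where $\eta$ is unchanged. The lower bound is then finished not by an affine Grassmannian computation but by reducing all the way to a Coxeter element $c\in W$ (or, for $PGL_n$ with $\tau\ne 1$, to $(12\cdots m)\tau$), where $\dim X_c(1)=\ell(c)=d(c)$ follows from the finite Deligne--Lusztig picture. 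The genuinely hard combinatorial content --- that such a reduction chain exists once $x\in\tW'$ and $\supp(\eta(x))=S$ --- rests on results about minimal-length elements and conjugacy classes in affine Weyl groups from He's work (notably \cite[Prop.~6.7]{He}), which your sketch alludes to but does not identify; this is where the classicality/type-$A$ hypothesis actually enters, rather than in the availability of a Grassmannian dimension formula.
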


See Section~\ref{sec:outline} for more detailed statements and an outline of the proof. Roughly speaking, our methods are combinatorial (whereas in \cite{GHKR2} the theory of $\epsilon$-adic groups was used). Important ingredients are a refinement of the reduction method of Deligne and Lusztig (Section~2), and the results of the second named author about conjugacy classes in affine Weyl groups, see~\cite{He}.

In~\cite{Beazley}, E.~Beazley obtained similar results for groups of type $A_n$, $C_2$ or $G_2$ using a similar method, but using only results on conjugacy classes in finite Weyl groups.

In Section~4, we briefly consider the case that $x\in\tW\setminus \tW'$, but all in all this case remains unclear. Note however that the relation to stratifications of the wonderful compactification of $G$ might provide further insight in this case, see~\cite{He-WonderfulComp}. Finally, a careful study of the reduction method also shows that affine Deligne-Lusztig varieties in the affine flag varieties are not equidimensional in general; we give a specific example in Section~5. Note that for affine Deligne-Lusztig varieties in the affine Grassmannian, equidimensionality if known if $b$ is in the torus $T(L)$ (\cite[Proposition 2.17.1]{GHKR1}) and if $b$ is basic (\cite[Theorem 1.2]{HartlViehmann}); in the intermediate cases, it is still an open question.


\section{Preliminaries}

\subsection{Notation}\label{sec:notation}

Let $G$ be a split connected reductive group over $\mathbb F_q$. We assume that $G$ is quasi-simple of adjoint type. As explained in \cite[5.9]{GHKR1}, all problems about the dimension of affine Deligne-Lusztig varieties easily reduce to this case.

Let $L=\kk((\e))$ be the field of formal Laurent series over $\kk$, and let $\s$ be the automorphism on $L$ defined by $\s(\sum a_n \e^n)=\sum a_n^q \e^n$. We also denote the induced automorphism on the loop group $G(L)$ by $\s$. 

Let $T$ be a maximal torus of $G$, let $B\supset T$ be a Borel subgroup of $G$, and let $B^-$ be the opposite Borel subgroup so that $T=B \cap B^-$. Let $\Phi$ be the set of roots and $Y$ be the coweight lattice. We denote by $Y_+$ the set of dominant coweights. Let $(\a_i)_{i \in S}$ be the set of simple roots determined by $(B, T)$. We denote by $W$ the Weyl group $N(T)/T$. For $i \in S$, we denote by $s_i$ the simple reflection corresponding to $i$.

For $w \in W$, we denote by $\supp(w)$ the set of simple reflections occurring in a reduced expression of $w$. So the condition $w\in W\setminus \bigcup_{T\subsetneq S}W_T$ is equivalent to $\supp(w)=S$.

For $w \in W$, we choose a representative in $N(T)$ and also write it as $w$. For any $J \subset S$, let $\Phi^+_J$ (resp. $\Phi^-_J$) be the positive (resp.~negative) roots spanned by $(\a_j)_{j \in J}$. 

Let $I$ be the inverse image of $B^-$ under the projection map $G(\kk[[\e]]) \mapsto G$ sending $\e$ to $0$. Let $\tW=N(T(L))/(T(L) \cap I)$ be the extended affine Weyl group of $G(L)$. Then it is known that $\tW=W \ltimes Y=\{w \e^\chi; w \in W, \chi \in Y\}$. Let $\ell: \tW \to \NN \cup \{0\}$ be the length function. For $x=w \e^\chi \in \tW$, we also write $x$ for the representative $w \e^\chi$ in $N(T(L))$. 

Let $X$ be the coroot lattice, and let $W_a=W \ltimes X \subset \tW$ be the affine Weyl group. Set $\tS=S \cup \{0\}$ and $s_0=\e^{\th^\vee} s_\th$, where $\th$ is the largest positive root of $G$. Then $(W_a, \tS)$ is a Coxeter system. Let $\k: \tW \to \tW/W_a$ be the natural projection. 

For any $J \subset \tS$, let $W_J$ be the subgroup of $W_a$ generated by $J$ and $\tW^J$ (resp. ${}^J \tW$) be the set of minimal length coset representative of $\tW/W_J$ (resp. $W_J \backslash \tW$). For example, ${ }^S \tW$ is the set of all elements for which the corresponding alcove is contained in the dominant chamber. In the case where $J \subset S$, we write $W^J$ for $\tW^J \cap W$ and ${}^J W$ for ${}^J \tW \cap W$.

Let $\l$ be a dominant coweight. Set $I(\l)=\{i \in S; \<\l, \a_i\>=0\}$, the ``set of walls'' that $\l$ lies on. For $J \subset S$, let $\rho^\vee_J \in Y_+$ with
\[
\<\rho^\vee_J, \a_i\>=\begin{cases} 1, & \text{ if } j \in J \\ 0, & \text{ if } j \notin J\end{cases}.
\]
We simply write $\rho^\vee$ for $\rho^\vee_S$. 

For any root $\a \in \Phi$, set $\d_\a=\begin{cases} 1, & \text{ if } \a \in \Phi^- \\ 0, & \text{ if } \a \in \Phi^+ \end{cases}$.

\subsection{}

Following \cite[1.4]{He}, we use the following notation:
For $x, x' \in \tW$ and $i \in \tS$, we write $x \xrightarrow{s_i} x'$ if $x'=s_i x s_i$ and $\ell(x') \le \ell(x)$. We write $x \tilde \to x'$ if there is a sequence $x=x_0, x_1, \cdots, x_n=x'$ of elements in $\tW$ such that for all $k$, $x_k=\t x_{k-1} \t \i$ for some $\t \in \tW$ with $\ell(\t)=0$ or $\tw_{k-1} \xrightarrow{s_i}_{\d} \tw_k$ for some $i \in \tS$. 
We write $x \tilde \approx x'$ if $x \tilde \to x'$ and $x' \tilde \to x$.


\subsection{}\label{sec:defW'}
Any element in $\tW$ can be written in a unique way as $v t^\mu w$ for $\mu \in Y_+$, $v \in W$ and $w \in {}^{I(\mu)} W$.
Note that in this case $t^\mu w \in {^S}\tW$, and $\ell(v \t^\mu w) = \ell(v) + \ell(t^\mu) - \ell(w)$.
Set
\[
\tW'=\{v t^\mu w;\ \mu\in Y_+,\ w\in{}^{I(\mu)} W,\  \<\mu, \a_i\>+\d_{v \a_i}-\d_{w \i \a_i} \neq 0 \quad \forall i \in S\}.
\]

It is proved by Lusztig \cite{L}, Shi \cite{S} and B\'edard \cite{B} that $\tW' \cap C$ is a two-sided cell for each $W_a$-coset $C$ in $\tW$. It is called the {\it lowest two-sided cell}. It is also called the union of the {\it shrunken Weyl chambers} in \cite{GHKR1} and \cite{GHKR2}.


\subsection{}
\label{subsec2.4}

We introduce a convenient notation for varieties of tuples of elements in $\Flag = G(L)/I$ (i.e., the affine flag variety of $G$ over $\kk$). Instead of giving a rigorous definition, it is more useful to explain the notation by examples. We denote by $\co_w \subset \Flag\times\Flag$ the locally closed subvariety of pairs $(g,g')$ such that the relative position of $g$ and $g'$ is $w$. Then we set
\begin{align*}
& \{ \xymatrix{ g \ar[r]^-w & g'' \ar[r]^-{w'} & g' } \} := \\
& \{  (g, g', g'') \in (\Flag)^3;\ (g,g'')\in \co_{w},\ (g'',g')\in \co_{w'} \}.
\end{align*}
Similarly,
\begin{align*}
& \{ \xymatrix{ g \ar[r]^-w \ar@/_2pc/[rr]_-{w''} & g'' \ar[r]^-{w'} & g' } \} := \\
& \{ ( g, g', g'') \in (\Flag)^3;\ (g,g'')\in \co_{w},\ (g'',g')\in \co_{w'},\ (g,g')\in\co_{w''} \}.
\end{align*}
Finally, we need conditions on relative positions where elements $g$ and $b\sigma(g)$ occur both---the simplest case being the affine Deligne-Lusztig varieties themselves:
\[
X_x(b) = \{ \xymatrix{ g \ar[r]^-x & b\s(g) } \}.
\]
In all these cases, we do not distinguish between the sets given by the conditions on the relative position, and the corresponding locally closed sub-ind-schemes of the product of affine flag varieties. 

The following properties are easy to prove. 

(1) Let $x, y \in \tW$. If $l(x y)=l(x)+l(y)$, then the map $(g, g', g'') \mapsto (g, g')$ gives an isomorphism \[\{ \xymatrix{ g \ar[r]^-x \ar@/_2pc/[rr]_-{x y} & g'' \ar[r]^-{y} & g' } \} \to \{ \xymatrix{ g \ar[r]^-{x y} & g' } \}.\]

(2) Let $w \in \tW$ and $s \in \tS$. If $w s<w$, then 
\[
\{\xymatrix{g \ar[r]^-w & g'' \ar[r]^-s & g'}\}=\{ \xymatrix{ g \ar[r]^-w \ar@/_2pc/[rr]_-{w s} & g'' \ar[r]^-s & g' } \} \sqcup \{ \xymatrix{ g \ar[r]^-w \ar@/_2pc/[rr]_-w & g'' \ar[r]^-s & g' } \},
\] 
where the first set on the right hand side of the equation is open, and the second one is closed. The projections $(g, g', g'') \mapsto (g, g')$ give rise to Zariski-locally trivial fiber bundles
\begin{align*} 
& \{ \xymatrix{ g \ar[r]^-w \ar@/_2pc/[rr]_-{w s} & g'' \ar[r]^-s & g' } \} \to \{ \xymatrix{ g \ar[r]^-{w s} & g' } \}; \\ 
&\{ \xymatrix{ g \ar[r]^-w \ar@/_2pc/[rr]_-w & g'' \ar[r]^-s & g' } \} \to \{ \xymatrix{ g \ar[r]^-w & g' } \}. 
\end{align*}
with fibers isomorphic to $\mathbb A^1$ in the first case, and isomorphic to $\mathbb A^1\setminus \{0\}$ in the second case.

\subsection{The reduction method of Deligne and Lusztig}

\begin{lemma}[He \cite{H1}, Lemma 1]
Let $w, w'\in \tW$. Then the set $\{ uu';\ u\le w,\ u'\le w' \}$ has a unique maximal element, which we denote by $w*w'$. We have $\ell(w*w') = \ell(w) + \ell(w\i (w*w')) = \ell( (w*w')(w')\i) + \ell(w')$, and $\supp(w*w') = \supp(w) \cup \supp(w')$.
\end{lemma}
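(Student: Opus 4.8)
The plan is to identify $w*w'$ with the \emph{Demazure product} of $w$ and $w'$ (the product in the $0$-Hecke monoid of $(W_a,\tS)$, extended by the length-zero elements), and to prove the existence of the unique maximal element together with the three stated identities by induction on $\ell(w')$. First a routine reduction: writing $w=\bar w\t$ and $w'=\bar w'\t'$ with $\bar w,\bar w'\in W_a$ and $\t,\t'$ of length $0$, one uses that a length-$0$ element normalizes $W_a$ and induces an automorphism of the Coxeter system $(W_a,\tS)$, and that $u\le w$ forces $u$ to have the same length-$0$ part as $w$; it follows that $\{uu';\ u\le w,\ u'\le w'\}$ is the set $\{\bar u\,\bar u'';\ \bar u\le\bar w,\ \bar u''\le\t\bar w'\t\i\}$ translated on the right by the fixed element $\t\t'$. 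Hence it is enough to treat the Coxeter group $W_a$, where $\supp$, $\ell$ and $\le$ are the usual notions, and I would argue there.

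The core case is $w'=s$ a simple reflection. Put $w*s:=ws$ if $\ell(ws)>\ell(w)$, and $w*s:=w$ otherwise. I claim
\[
\{us';\ u\le w,\ s'\le s\}=\{u;\ u\le w\}\cup\{us;\ u\le w\}=\{v;\ v\le w*s\}.
\]
Both equalities follow from the lifting property of the Bruhat order: if $\ell(ws)>\ell(w)$, then $u\le w$ implies $us\le ws$, while any $v\le ws$ either has $vs>v$, in which case $v\le w$, or $vs<v$, in which case $v=(vs)s$ with $vs\le w$; if $\ell(ws)<\ell(w)$, then $u\le w$ already implies $us\le w$. Thus $w*s$ is the unique maximal element, and from the construction $\ell(w*s)=\ell(w)+\ell(w\i(w*s))$ with $w\i(w*s)\in\{1,s\}$. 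There is an evident mirror-image statement for left multiplication by $s$.

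For general $w'$ fix a reduced expression $w'=s_1\cdots s_k$. By the subword property of the Bruhat order, $\{u';\ u'\le w'\}=\{t_1\cdots t_k;\ t_i\in\{1,s_i\}\}$, so that
\[
\{uu';\ u\le w,\ u'\le w'\}=\{u\,t_1\cdots t_k;\ u\le w,\ t_i\in\{1,s_i\}\}=\{v;\ v\le w*s_1*\cdots*s_k\},
\]
where the second equality is obtained by grouping the factors from the left and applying the single-reflection case $k$ times in succession. Hence the unique maximal element exists and equals $w*w':=w*s_1*\cdots*s_k$, and since the leftmost set above does not refer to the reduced expression, this element does not depend on it. From the construction, $w*w'=w\cdot(w\i(w*w'))$ with $w\i(w*w')$ equal to a subword of $s_1\cdots s_k$ and $\ell(w*w')=\ell(w)+\ell(w\i(w*w'))$; in particular a reduced word for $w*w'$ is obtained by concatenating one for $w$ with one for $w\i(w*w')$, so $\supp(w)\subseteq\supp(w*w')\subseteq\supp(w)\cup\supp(w')$. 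Running the symmetric argument with a reduced word of $w$, grouped from the right and using left multiplication, identifies $w*w'$ with an expression in which $w'$ is a final segment, giving $\ell(w*w')=\ell((w*w')(w')\i)+\ell(w')$ and $\supp(w')\subseteq\supp(w*w')$. Combining the inclusions yields $\supp(w*w')=\supp(w)\cup\supp(w')$.

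The step I expect to be the real obstacle is the single-reflection identity: one has to extract from the lifting property exactly the implications needed, with the two sub-cases $vs<v$ and $vs>v$ in each direction, and keep the bookkeeping straight. After that the rest is a formal induction; the only further care is the (routine) handling of the length-$0$ elements and the standard fact that $\supp$ is well defined on $\tW$.
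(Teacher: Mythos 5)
The paper gives no internal proof of this lemma---it is imported as a citation to He~\cite{H1}, Lemma~1---so there is no paper argument to compare against. Your proof is correct and is essentially the standard (and, as far as I can tell, the cited) argument: realize $w*w'$ as the Demazure product, prove the single-reflection case via the lifting property of the Bruhat order (showing $\{us';\ u\le w,\ s'\le s\}$ is the full lower interval $[e,w*s]$ in both subcases), iterate along a reduced word of $w'$ to get existence, independence of the reduced word, and the identity $\ell(w*w')=\ell(w)+\ell(w\i(w*w'))$, and then run the mirror argument on the $w$ side to get the second length identity and the inclusion $\supp(w')\subseteq\supp(w*w')$. The only place to be slightly careful is the reduction to $W_a$, where one needs that $u\le w$ in $\tW$ forces $u$ and $w$ into the same $W_a$-coset, and that conjugation by a length-$0$ element is an automorphism of the Coxeter system $(W_a,\tS)$, hence preserves $\ell$, $\le$, and $\supp$ up to relabelling; you flag this as routine, which is fair.
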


Note that the operation $*$ is associative. 

\begin{prop}\label{generalized-DL-reduction}
Let $w, w'\in \tW$, and let $w'' \in \{ ww', w*w' \}$. All fibers of the projection
\[
\pi\colon\{ \xymatrix{ g \ar[r]^-w \ar@/_2pc/[rr]_-{w''} & g'' \ar[r]^-{w'} & g' } \}
\longrightarrow
\{ \xymatrix{ g \ar[r]^-{w''} & g'} \}
\]
which maps $(g, g', g'')$ to $(g, g')$ have dimension
\[
\dim\pi^{-1}( ( g, g') ) \ge \left\{
\begin{array}{ll}
\ell(w) + \ell(w') - \ell(w*w') & \text{if } w'' = w*w', \\
\frac 12  ( \ell(w) + \ell(w') - \ell(ww') ) & \text{if } w'' = ww'.
\end{array}
\right.
\]
\end{prop}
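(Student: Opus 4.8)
\emph{Sketch of the argument.}
Both estimates are proved by induction, peeling off one simple reflection at a time and using only the associativity of $*$, the lemma of He recalled above, and properties (1) and (2) of Section~\ref{subsec2.4}; the mechanism is that a morphism all of whose fibres are isomorphic to $\A{1}$ (resp.\ $\A{1}\setminus\{0\}$) raises dimension by exactly $1$. I use freely that $\pi$ is surjective and the standard fact that $(g,g'')\in\co_x$ and $(g'',g')\in\co_y$ with $\ell(xy)=\ell(x)+\ell(y)$ force $(g,g')\in\co_{xy}$ with $g''$ uniquely determined; this is what makes property (1) applicable, so that auxiliary flags may be inserted and contracted. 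The base cases---$\ell(w')=0$ for the first estimate, $\ell(ww')=\ell(w)+\ell(w')$ for the second---are trivial: by property (1) the fibre is a single flag, and both right-hand sides equal $0$.

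\emph{The case $w''=w*w'$.} Induct on $\ell(w')$ (here $w$ is fixed). Write $w'=w_1's$ with $s\in\tS$, $\ell(w_1')=\ell(w')-1$, and put $v:=w*w_1'$, so $w*w'=v*s$. Since $\ell(w')=\ell(w_1')+\ell(s)$, property (1) inserts a flag $g'''$ between $g''$ and $g'$ and identifies $\pi\i(g,g')$ with the set of pairs $(g'',g''')$ such that $g\xrightarrow{w}g''\xrightarrow{w_1'}g'''\xrightarrow{s}g'$. If $vs>v$, then $w''=vs$, and a short length argument---if $g\xrightarrow{u}g'''\xrightarrow{s}g'$ with $u\le v$, then necessarily $us>u$, hence $us=vs$ and $u=v$---shows that $g'''$ is the unique flag with $g\xrightarrow{v}g'''\xrightarrow{s}g'$; thus $\pi\i(g,g')$ is the fibre over $(g,g''')$ of the projection associated to $(w,w_1';v)$, of dimension $\ge\ell(w)+\ell(w_1')-\ell(v)=\ell(w)+\ell(w')-\ell(w'')$ by induction. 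If $vs<v$, then $w''=v$, and I keep only the sublocus where $g\xrightarrow{v}g'''$: by property (2) the admissible flags $g'''$ form an $\A{1}\setminus\{0\}$ over $(g,g')$, and over each of them the fibre is the $(w,w_1';v)$-fibre of dimension $\ge\ell(w)+\ell(w_1')-\ell(v)$, whence $\dim\pi\i(g,g')\ge1+\bigl(\ell(w)+\ell(w_1')-\ell(v)\bigr)=\ell(w)+\ell(w')-\ell(w'')$.

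\emph{The case $w''=ww'$.} Induct on the pair $(\ell(w)+\ell(w'),\ell(w))$ ordered lexicographically. Assume $\ell(ww')<\ell(w)+\ell(w')$; then $\ell(w)\ge1$, so there is $s\in\tS$ with $ws<w$. Suppose first that such an $s$ can be chosen with $sw'<w'$ as well. Then $ww'=(ws)(sw')$ with $\ell(ws)+\ell(sw')=\ell(w)+\ell(w')-2$; writing $w=(ws)\cdot s$ and $w'=s\cdot(sw')$ and applying property (1) twice to insert flags $g_1,g_2$, one identifies $\pi\i(g,g')$ with the set of chains $g\xrightarrow{ws}g_1\xrightarrow{s}g''\xrightarrow{s}g_2\xrightarrow{sw'}g'$. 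On the closed sublocus $g_1=g_2$ this is the $(ws,sw';ww')$-fibre over $(g,g')$---of dimension $\ge\frac12\bigl(\ell(ws)+\ell(sw')-\ell(ww')\bigr)=\frac12\bigl(\ell(w)+\ell(w')-\ell(ww')\bigr)-1$ by induction---together with a free choice of $g''$ ranging over the $\A{1}$ of flags with $g_1\xrightarrow{s}g''$, so $\dim\pi\i(g,g')\ge\frac12\bigl(\ell(w)+\ell(w')-\ell(ww')\bigr)$. If on the other hand no such $s$ exists, fix any $s$ with $ws<w$; then $sw'>w'$, hence $\ell(sw')=\ell(w')+1$, and applying property (1) once to insert $g_1$ (via $w=(ws)\cdot s$) and once to contract $g_1\xrightarrow{s}g''\xrightarrow{w'}g'$ to $g_1\xrightarrow{sw'}g'$ (legitimate since $\ell(sw')=\ell(s)+\ell(w')$) identifies $\pi\i(g,g')$ \emph{isomorphically} with the $(ws,sw';ww')$-fibre. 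The latter has the same right-hand side, and the inductive hypothesis applies to it because $\ell(ws)<\ell(w)$ while $\ell(ws)+\ell(sw')=\ell(w)+\ell(w')$.

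\emph{Expected main obstacle.} The delicate part is the bookkeeping of relative positions in the length-dropping configurations: at each step one must verify that the auxiliary flags genuinely recover $\pi\i(g,g')$---which rests on length identities such as $\ell(w')=\ell(w_1')+\ell(s)$, $\ell(w)=\ell(ws)+\ell(s)$ and $\ell(sw')=\ell(w')\pm1$---and that the unwanted intermediate positions are ruled out by a length count. One also has to be slightly careful with the length-$0$ part of $\tW$ when passing between descents and reduced expressions.
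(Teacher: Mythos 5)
The proposal is correct, and it rests on the same ingredients as the paper's proof---He's lemma on $*$, its associativity, and the fiber-bundle properties (1) and (2) of Section~\ref{subsec2.4}; the basic move in both is to insert or contract an auxiliary flag via a reduced splitting and descend to a pair of the form $(ws, sw')$. The organization, however, is genuinely different. The paper runs a \emph{single} induction on $\ell(w')$, peeling a simple reflection $s$ off the \emph{left} of $w'$ (so $w'=sw'_1$), and handles both choices of $w''$ at once by splitting into the $X_1$- and $X_2$-loci when $ws<w$: $X_1$ feeds the $ww'$-estimate, $X_2$ feeds the $w*w'$-estimate. You instead treat the two estimates by two independent inductions. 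For $w''=w*w'$ you peel off the \emph{right} of $w'$, set $v=w*w'_1$, and use a short length count (correctly carried out rather than an unqualified appeal to the lifting lemma---the lifting lemma alone would not give uniqueness of $g'''$) to pin down the auxiliary flag when $vs>v$, and property (2) when $vs<v$. For $w''=ww'$ you peel off the \emph{right} of $w$ and branch on whether the chosen right descent $s$ of $w$ is also a left descent of $w'$: if so you gain an $\A{1}$ and drop $\ell(w)+\ell(w')$ by $2$; if not you pass isomorphically to $(ws,sw')$, which raises $\ell(w')$ by one. Because $\ell(w')$ can increase in the latter branch, your induction on the lexicographic pair $(\ell(w)+\ell(w'),\ell(w))$ is what makes this work, whereas the paper's choice of always peeling from $w'$ guarantees $\ell(w')$ strictly decreases and a plain induction on $\ell(w')$ suffices. (Incidentally, since $\ell(ws)<\ell(w)$ in both of your branches, a single induction on $\ell(w)$ would also do, and the lexicographic refinement is not strictly needed.) Both routes are valid; the paper's is somewhat more economical, your two-track version perhaps makes the role of the $\A{1}$- versus $(\A{1}\setminus\{0\})$-fibers more transparent.
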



\begin{proof}
We proceed by induction on $\ell(w')$. If $w'=1$, then the statement is obvious. Now assume that $l(w')>0$. Then $w'=s w'_1$ for some $s \in \tS$ and $w'_1 \in \tW$ with $\ell(w')=\ell(w'_1)+1$. Then 
\[
\{ \xymatrix{ g \ar[r]^-w \ar@/_2pc/[rrr]_-{w''} & g'' \ar[r]^-s & g''' \ar[r]^-{w'_1} & g' } \} \cong 
\{ \xymatrix{ g \ar[r]^-w \ar@/_2pc/[rr]_-{w''} & g'' \ar[r]^-{w'} & g' } \}
\]

If $w s>w$, then 
\[
\{ \xymatrix{ g \ar[r]^-w \ar@/_2pc/[rrr]_-{w''} & g'' \ar[r]^-s & g''' \ar[r]^-{w'_1} & g' } \} \cong 
\{ \xymatrix{ g \ar[r]^-{w s} \ar@/_2pc/[rr]_-{w''} & g''' \ar[r]^-{w'_1} & g' }. \}
\]
We also have that $\ell(w s)+\ell(w'_1)=\ell(w)+1+\ell(w')-1=\ell(w)+\ell(w')$, $w s w'_1=w w'$ and $(w s)*w'_1=w*s*w'_1=w*w'$. Now the statement follows from inductive hypothesis on $w'_1$. 

If $w s<w$, then 
\[
\{ \xymatrix{ g \ar[r]^-w \ar@/_2pc/[rrr]_-{w''} & g'' \ar[r]^-s & g''' \ar[r]^-{w'_1} & g' } \}=X_1 \sqcup X_2,
\] 
where 
\begin{align*}
X_1 &=\{\xymatrix{ g \ar[r]^-w \ar@/_2pc/[rrr]_-{w''} \ar@/^2pc/[rr]^-{ws} & g'' \ar[r]^-s & g''' \ar[r]^-{w'_1} & g' } \} \\ 
X_2 &=\{\xymatrix{ g \ar[r]^-w \ar@/_2pc/[rrr]_-{w''} \ar@/^2pc/[rr]^-w & g'' \ar[r]^-s & g''' \ar[r]^-{w'_1} & g' } \}.
\end{align*}

The projection from $X_1$ to $\{ \xymatrix{ g \ar[r]^-{w''} & g'} \}$ factors through
\[ X_1 \to \{\xymatrix{ g \ar@/_2pc/[rr]_-{w''} \ar[r]^-{ws} & g''' \ar[r]^-{w'_1} & g' } \} \to \{ \xymatrix{ g \ar[r]^-{w''} & g'} \},\] where the first map is a bundle map whose fibers are all of dimension $1$. If $w''=w w'$, then by induction hypothesis on $w'_1$, the fibers of the second map all have dimension $\ge \frac{1}{2}(\ell(w s)+\ell(w'_1)-l(w w'))$. Notice that $\ell(w s)+\ell(w'_1)=\ell(w)-1+\ell(w')-1=\ell(w)+\ell(w')-2$. So the fibers of the map $X_1 \to \{ \xymatrix{ g \ar[r]^-{w''} & g'} \}$ all have dimension $\ge \frac{1}{2}(\ell(w)+\ell(w')-l(w w'))$, which gives us the desired lower bound on the fiber dimension.

The projection from $X_2$ to $\{ \xymatrix{ g \ar[r]^-{w''} & g'} \}$ factors through
\[ X_2 \to \{\xymatrix{ g \ar@/_2pc/[rr]_-{w''} \ar[r]^-w & g''' \ar[r]^-{w'_1} & g' } \} \to \{ \xymatrix{ g \ar[r]^-{w''} & g'} \},\] where the first map is a bundle map whose fibers are all of dimension $1$. If $w''=w*w'=w*s*w'_1=w*w'_1$, then by induction hypothesis on $w'_1$, the fibers of the second map all have dimension $\ge (\ell(w)+\ell(w'_1)-\ell(w*w'))$. Notice that $\ell(w)+\ell(w'_1)=\ell(w)+\ell(w')-1$. So the fibers of the map $X_2 \to \{ \xymatrix{ g \ar[r]^-{w''} & g'} \}$ all have dimension $\ge (\ell(w)+\ell(w')-\ell(w*(w'))$.
\end{proof}

As a corollary, we can prove the analog, in the affine context, of the ``reduction method'' of Deligne and Lusztig (see \cite[proof of Theorem 1.6]{DL}). This result can of course be proved directly, along the lines of the proof of the proposition above, and was also worked out before by Haines at the suggestion of Lusztig.

\begin{cor}\label{Lemma-DLReduction}
Let $x\in \widetilde{W}$, and let $s\in\tS$ be a simple affine reflection.
\begin{enumerate}
\item 
If $\ell(sxs) = \ell(x)$, then there exists a universal homeomorphism $X_x(b) \rightarrow X_{sxs}(b)$.
\item
If $\ell(sxs) = \ell(x)-2$, then $X_x(b)$ can be written as a disjoint union $X_x(b) = X_1 \sqcup X_2$ where $X_1$ is closed and $X_2$ is open, and such that there exist morphisms $X_1\to X_{sxs}(b)$ and $X_2\to X_{sx}(b)$ which are compositions of a Zariski-locally trivial fiber bundle with one-dimensional fibers and a universal homeomorphism.
\end{enumerate}
\end{cor}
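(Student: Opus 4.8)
The plan is to follow the suggestion in the remark and prove the corollary directly, by re-running the argument of Proposition~\ref{generalized-DL-reduction} with the second flag $g'$ replaced by the Frobenius-twisted diagonal $g'=b\s(g)$. The preliminary observation I would isolate first is that $\Flag$ --- and with it all the incidence varieties of \S\ref{subsec2.4} --- is defined over $\mathbb F_q$, and that under this $\mathbb F_q$-structure $\s$ is the $q$-power Frobenius; consequently $b\s\colon\Flag\to\Flag$ is the composite of this Frobenius with left translation by $b$, so it is a morphism of $\kk$-schemes and, moreover, a universal homeomorphism. Since $G$ is split, $\s$ fixes every element of $\tS$, so $b\s$ preserves relative positions: applying it to an arrow $h\xrightarrow{u}h'$ yields $b\s(h)\xrightarrow{u}b\s(h')$. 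This, and nothing else about $\s$, is what I would use; all remaining steps are the relative-position manipulations of \S\ref{subsec2.4}.

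For part~(1), I would first record the elementary length fact that $\ell(sxs)=\ell(x)$ holds exactly when precisely one of $sx<x$, $xs<x$ does; say $sx<x$ (the other case being mirror-symmetric). Then $x=s\cdot(sx)$ and $sxs=(sx)\cdot s$ are both length-additive, so by \S\ref{subsec2.4}(1) we get $X_x(b)=\{g\xrightarrow{x}b\s(g)\}\cong\{g\xrightarrow{s}g_1\xrightarrow{sx}b\s(g)\}$, with $g_1$ uniquely determined by $g$. Pushing $b\s$ through the first arrow gives the chain $g_1\xrightarrow{sx}b\s(g)\xrightarrow{s}b\s(g_1)$, which, again by \S\ref{subsec2.4}(1) (using $\ell((sx)s)=\ell(sx)+1$), collapses to $g_1\xrightarrow{sxs}b\s(g_1)$; hence $g_1\in X_{sxs}(b)$. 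Writing $X_{sxs}(b)\cong\{g_1\xrightarrow{sx}h\xrightarrow{s}b\s(g_1)\}$ in the same fashion, this identifies $X_x(b)$ with the set of triples $(g_1,h,g)$ such that $(g_1,h)\in X_{sxs}(b)$ and $h=b\s(g)$ --- the constraint $g\xrightarrow{s}g_1$ being automatic, since it is equivalent to $h\xrightarrow{s}b\s(g_1)$. In other words, $X_x(b)$ is the fibre product of $X_{sxs}(b)\to\Flag$, $(g_1,h)\mapsto h$, with $b\s\colon\Flag\to\Flag$, and the projection $X_x(b)\to X_{sxs}(b)$, $g\mapsto g_1$, is a base change of $b\s$, hence a universal homeomorphism, as wanted.

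For part~(2), now $sx<x$ and $xs<x$, so $\ell(sxs)=\ell(x)-2$ and the multiplication $(sx)\cdot s=sxs$ is length-decreasing. As above $X_x(b)\cong\{g\xrightarrow{s}g_1\xrightarrow{sx}b\s(g)\}$, and pushing $b\s$ through the first arrow produces the chain $g_1\xrightarrow{sx}b\s(g)\xrightarrow{s}b\s(g_1)$. I would then apply \S\ref{subsec2.4}(2) with $w=sx$: the variety $\{u\xrightarrow{sx}v\xrightarrow{s}v'\}$ splits into a closed piece on which the relative position of $u$ and $v'$ equals $sxs$ and a complementary open piece on which it equals $sx$, with the projection $(u,v',v)\mapsto(u,v')$ a Zariski-locally trivial bundle with one-dimensional fibres on each. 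Pulling back along $(g,g_1)\mapsto(g_1,b\s(g_1),b\s(g))$ decomposes $X_x(b)=X_1\sqcup X_2$ with $X_1$ closed, $X_2$ open, and with $g_1\in X_{sxs}(b)$ on $X_1$, $g_1\in X_{sx}(b)$ on $X_2$; over each piece the map $g\mapsto g_1$ factors, exactly as in part~(1), as the relevant bundle projection followed by a base change of the universal homeomorphism $b\s$. This yields the morphisms $X_1\to X_{sxs}(b)$ and $X_2\to X_{sx}(b)$ of the asserted shape.

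The step I expect to require the most care --- rather than a deep obstacle --- is making sure that the two moves used repeatedly, ``insert an intermediate alcove'' and ``push $s$ across $b\s$'', are genuine morphisms of $\kk$-schemes (not just bijections on $\kk$-points) and that the resulting comparison maps are, up to the fibre-bundle projections, base changes of $b\s$. Both points rest on the $\mathbb F_q$-structure on $\Flag$ and on the fact that $b\s$ there is left translation composed with the Frobenius; once these are pinned down, what remains is exactly the bookkeeping with the length function already carried out in Proposition~\ref{generalized-DL-reduction}, together with tracking via \S\ref{subsec2.4}(2) which of the two pieces in part~(2) is open and which is closed.
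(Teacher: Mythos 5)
Your proposal is correct and follows essentially the same route as the paper's proof: insert the intermediate alcove $g_1$ via the length-additive factorization $x=s\cdot(sx)$, push the relation across $b\sigma$ to produce the chain $g_1\xrightarrow{sx}b\sigma(g)\xrightarrow{s}b\sigma(g_1)$, and split according to the relative position of $g_1$ and $b\sigma(g_1)$. The only presentational difference is that you phrase the final comparison maps explicitly as base changes of the universal homeomorphism $b\sigma$, whereas the paper routes through Proposition~\ref{generalized-DL-reduction} and the intermediate universal homeomorphism $X_1\to X_1'$; the underlying computations and the length bookkeeping are identical, and your version is, if anything, a touch more transparent about \emph{why} one gets universal homeomorphisms rather than merely bijections.

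One small imprecision worth flagging: in part~(1) you assert that $\ell(sxs)=\ell(x)$ holds \emph{exactly when} precisely one of $sx<x$, $xs<x$ holds. This tacitly excludes the degenerate possibility $sxs=x$ (equivalently $sx=xs$), in which case $sx<x$ and $xs<x$ have the same sign and yet $\ell(sxs)=\ell(x)$. That case is of course trivial --- the identity map already gives the required universal homeomorphism --- and the paper's own reduction ``we may assume $sx<x$'' silently skips over it in the same way, but it should be mentioned for the statement you make to be literally correct.
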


\begin{proof}
By possibly exchanging, in case~(1), $x$ and $sxs$, we may assume that $sx < x$. By the proposition, the projection
\[
X' :=
\left\{
\raisebox{.5\depth}{
\xymatrix{ g \ar[d]_-s \ar[r]^-x & b\sigma(g) \ar[d]^-s \\
g_1 \ar[ur]^-{sx} & b\sigma(g_1)
}
}
\right\}
\longrightarrow
\{ \xymatrix{ g \ar[r]^-x & b\sigma(g) } \} = X_x(b).
\]
is an isomorphism, so we may replace $X_x(b)$ by $X'$. We write $X'$ as the disjoint union
\[
X' = X_1 \sqcup X_2 :=
\left\{
\raisebox{.5\depth}{
\xymatrix{ g \ar[d]_-s \ar[r]^-x & b\sigma(g) \ar[d]^-s \\
g_1 \ar[ur]^-{sx} \ar[r]_-{sxs} & b\sigma(g_1)
}
}
\right\}
\sqcup
\left\{
\raisebox{.5\depth}{
\xymatrix{ g \ar[d]_-s \ar[r]^-x & b\sigma(g) \ar[d]^-s \\
g_1 \ar[ur]^-{sx} \ar[r]_-{sx} & b\sigma(g_1)
}
}
\right\}
\]
Since we have $\ell(sx)<\ell(x)$, the natural morphism
\[
X_1 \rightarrow X_1' = \{ \xymatrix{ g_1 \ar[r]^-{sx} \ar@/_2pc/[rr]_-{sxs} & g_2 \ar[r]^-{s} & b\sigma(g_1) } \}
\]
is a universal homeomorphism (note that the composition of $X_1\rightarrow X_1'$ with the projection to $g_2$ is the map $g\mapsto b\sigma(g)$).

Now we distinguish between the two cases. In case~(1), $X_2 = \emptyset$, and applying the proposition once more, we find that in this case the projection
\[
X_1' = \{ \xymatrix{ g_1 \ar[r]^-{sx} \ar@/_2pc/[rr]_-{sxs} & g_2 \ar[r]^-{s} & b\sigma(g_1) } \}
\longrightarrow 
\{ \xymatrix{ g_1 \ar[r]^-{sxs} & b\sigma(g_1)  }\} = X_{sxs}(b)
\]
is an isomorphism.

Next we come to case~(2). The projection
\[
X_1' = \{ \xymatrix{ g_1 \ar[r]^-{sx} \ar@/_2pc/[rr]_-{sxs} & g_2 \ar[r]^-{s} & b\sigma(g_1) } \}
\longrightarrow 
\{ \xymatrix{ g_1 \ar[r]^-{sxs} & b\sigma(g_1)  }\} = X_{sxs}(b)
\]
has fibers of dimension $\frac 12 (\ell(sx) + \ell(s) - \ell(sxs)) = 1$, which proves the claim about $X_1$. Furthermore $X_2$ can be replaced with
\[
X_2' = \{ \xymatrix{ g_1 \ar[r]^-{sx} \ar@/_2pc/[rr]_-{sx} & g_2 \ar[r]^-{s} & b\sigma(g_1) } \}
\]
up to a universal homeomorphism, and $X_2'$ projects to $X_{sx}(b)$ with $1$-dimensional fibers. The corollary is proved.
\end{proof}

With slightly more care, one can show that in case~(2) of the lemma, the fibers of the projection $X_1\rightarrow X_{sxs}(b)$ are all isomorphic to $\mathbb A^1$, whereas the fibers of $X_2 \rightarrow X_{sx}(b)$ are $\mathbb A^1\setminus\{ 0\}$. This reflects the properties discussed at the end of subsection~\ref{subsec2.4}.

\begin{lemma}
Let $x, \t \in \tW$ with $\ell(\t)=0$. Then for any $b \in G(L)$, $X_{x}(b)$ is isomorphic to $X_{\t x \t \i}(b)$. 
\end{lemma}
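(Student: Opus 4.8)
The plan is to use the description of $\tW$ as $N(T(L))/(T(L)\cap I)$ together with the fact that length-$0$ elements of $\tW$ normalize the Iwahori subgroup $I$. The key observation is that if $\ell(\t)=0$, then a representative $\dot\t\in N(T(L))$ of $\t$ satisfies $\dot\t I\dot\t^{-1}=I$; indeed, length-$0$ elements of the extended affine Weyl group are exactly those that stabilize the base alcove, hence stabilize $I$. Granting this, the map $g\mapsto g\dot\t^{-1}$ descends to a well-defined isomorphism $\Flag = G(L)/I \to G(L)/I$, since $I\dot\t^{-1}=\dot\t^{-1}I$.

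First I would fix such a representative $\dot\t$ and record the normalization property $\dot\t I = I\dot\t$, and consequently $\dot\t (IxI)\dot\t^{-1} = I\,\dot\t x\dot\t^{-1}\,I = I(\t x\t^{-1})I$ as subsets of $G(L)$ (using that $\dot\t x\dot\t^{-1}$ represents $\t x\t^{-1}\in\tW$, and that the double coset $IyI$ depends only on the image of $y$ in $\tW$). Next I would define $\varphi\colon \Flag\to\Flag$, $gI\mapsto g\dot\t^{-1}I$, check it is a well-defined isomorphism of ind-schemes with inverse $gI\mapsto g\dot\t I$. Then I would verify that $\varphi$ carries $X_x(b)$ into $X_{\t x\t^{-1}}(b)$: if $g^{-1}b\s(g)\in IxI$, then setting $h = g\dot\t^{-1}$ we compute $h^{-1}b\s(h) = \dot\t g^{-1} b\s(g)\s(\dot\t)^{-1}$; since $\ell(\t)=0$ the element $\dot\t$ can be chosen with $\s(\dot\t)=\dot\t$ (or at least $\s(\dot\t)^{-1}\dot\t\in T(L)\cap I$-type correction that is absorbed), so $h^{-1}b\s(h)\in \dot\t(IxI)\dot\t^{-1} = I(\t x\t^{-1})I$. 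Hence $\varphi$ restricts to the desired isomorphism $X_x(b)\isomarrow X_{\t x\t^{-1}}(b)$.

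The main obstacle I anticipate is the Frobenius-twist bookkeeping: one needs the chosen representative $\dot\t\in N(T(L))$ to be $\s$-fixed (or to control $\s(\dot\t)^{-1}\dot\t$), so that conjugation by $\dot\t$ commutes appropriately with the $\s$-twisted conjugation defining $X_x(b)$. Since $G$ is split over $\mathbb F_q$ and $T$ is split, the Weyl group part $w$ of $\t=w\e^\chi$ has an $\mathbb F_q$-rational representative, and $\e^\chi$ is visibly $\s$-fixed, so a $\s$-fixed lift $\dot\t$ exists; this needs to be stated carefully. A secondary minor point is making precise that $IyI$ as a subset of $G(L)$ depends only on the class of $y$ in $\tW = N(T(L))/(T(L)\cap I)$ and that $T(L)\cap I\subset I$, so that replacing $y$ by $\dot\t x\dot\t^{-1}$ on the nose is legitimate. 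None of these is serious; the proof is essentially the translation action of $\dot\t^{-1}$ on the affine flag variety combined with the normalization $\dot\t I\dot\t^{-1}=I$.
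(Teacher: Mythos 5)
Your proof is correct and follows essentially the same route as the paper: translate by $\dot\t^{-1}$ on the affine flag variety, use that length-zero elements normalize $I$, and track the $\sigma$-twist. The only point where you are more explicit than the paper is the Frobenius bookkeeping on the representative $\dot\t$; your parenthetical observation that one may simply absorb $\sigma(\dot\t)^{-1}\dot\t \in T(L)\cap I$ into $I$ is exactly what the paper uses implicitly in the identity $\tau I x I\sigma(\tau)^{-1}=I\tau x\tau^{-1}I$, so there is no real divergence.
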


\begin{proof}
Notice that $X_{x}(b)=\{g I; g \i b \s(g) \in I x I\}$. Thus the isomorphism $G(L)/I \to G(L)/I$, $g I \mapsto g I \t \i=g \t \i I$ gives an isomorphism from $X_{x}(b)$ to $\{g I; (g \t) \i b \s(g \t) \in I x I\}=\{g I; g \i b \s(g) \in \t I x I \s(\t) \i=I \t x \t \i I\}=X_{\t x \t \i}(b)$.
\end{proof}

Applying this lemma and the conjugation steps in the reduction method of Deligne and Lusztig, we obtain:

\begin{cor}\label{cor-reduction1}
Let $x, x' \in \tW$, $b\in G(L)$. If $x \to x'$, and $X_{x'}(b)\ne\emptyset$, then $X_x(b)\ne\emptyset$ and $\dim(X_{x}(b))-\dim(X_{x'}(b)) \ge \frac{1}{2}(\ell(x)-\ell(x'))$. 
\end{cor}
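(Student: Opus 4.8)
The statement to prove is Corollary~\ref{cor-reduction1}: for $x \to x'$ in $\tW$ and $b \in G(L)$, non-emptiness of $X_{x'}(b)$ implies non-emptiness of $X_x(b)$ together with the dimension bound $\dim X_x(b) - \dim X_{x'}(b) \ge \frac12(\ell(x) - \ell(x'))$.

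The plan is to decompose the relation $x \to x'$ into its two basic constituent moves, as defined in the notation subsection following \cite[1.4]{He}: conjugation by a length-zero element $\tau$, and a single step $x \xrightarrow{s} sxs$ with $\ell(sxs) \le \ell(x)$ for some $s \in \tS$. Since $\to$ is by definition the transitive closure of these moves, and since the asserted inequality is additive along chains (the right-hand side telescopes: $\frac12(\ell(x) - \ell(x'')) = \frac12(\ell(x) - \ell(x')) + \frac12(\ell(x') - \ell(x''))$) while non-emptiness propagates, it suffices to handle each of the two elementary moves separately.

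For the conjugation move $x' = \tau x \tau^{-1}$ with $\ell(\tau) = 0$, the preceding Lemma gives an isomorphism $X_x(b) \cong X_{\tau x \tau^{-1}}(b) = X_{x'}(b)$; in particular $\ell(x) = \ell(x')$, so both the non-emptiness and the (trivial) inequality $0 \ge 0$ follow at once. For the reflection move, apply Corollary~\ref{Lemma-DLReduction} with the reflection $s$. If $\ell(sxs) = \ell(x)$ we are in case~(1): there is a universal homeomorphism $X_x(b) \to X_{sxs}(b)$, hence equality of dimensions and of non-emptiness, and again $\ell(x) = \ell(x')$ makes the inequality trivial. If $\ell(sxs) = \ell(x) - 2$ we are in case~(2): writing $X_x(b) = X_1 \sqcup X_2$ with $X_1$ mapping to $X_{sxs}(b)$ and $X_2$ mapping to $X_{sx}(b)$, each map being a composition of a universal homeomorphism with a Zariski-locally trivial $\mathbb A^1$-bundle. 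One checks that in this situation the minimal-length move $x \xrightarrow{s} sxs$ forces $x' = sxs$ (the $\xrightarrow{s}_\d$ notation records exactly the step that lowers or preserves length; here it lowers it), so if $X_{x'}(b) = X_{sxs}(b) \ne \emptyset$ then $X_1 \ne \emptyset$, hence $X_x(b) \ne \emptyset$, and $\dim X_x(b) \ge \dim X_1 = \dim X_{sxs}(b) + 1 = \dim X_{x'}(b) + 1 = \dim X_{x'}(b) + \frac12(\ell(x) - \ell(x'))$, which is the claimed bound.

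The only mildly delicate point is bookkeeping: one must be careful that in the definition of $x \to x'$ the reflection step is allowed to be length-preserving or length-decreasing but never length-increasing, so that the target of a single step is either $sxs$ with $\ell(sxs) = \ell(x)$ (case~(1)) or $sxs$ with $\ell(sxs) < \ell(x)$ (case~(2)), and in the latter case $\ell(sxs) = \ell(x) - 2$ automatically since $\ell$ changes parity under multiplication by a single simple reflection. There is no genuine obstacle here — the substance is entirely contained in Proposition~\ref{generalized-DL-reduction}, Corollary~\ref{Lemma-DLReduction} and the conjugation Lemma — so the proof is a short assembly of these inputs together with the telescoping observation.
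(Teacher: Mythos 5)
Your proof is correct and is precisely the argument the paper leaves implicit: decompose the chain into elementary moves, handle the $\tau$-conjugation step with the preceding Lemma and the $s$-conjugation step with the two cases of Corollary~\ref{Lemma-DLReduction} (using surjectivity of $X_1\to X_{sxs}(b)$ to propagate non-emptiness and its one-dimensional fibers to get the bound $1 = \tfrac12(\ell(x)-\ell(sxs))$), and observe that the inequality telescopes along the chain. The only cosmetic point is that the sentence about the reflection move ``forcing'' $x'=sxs$ has nothing to verify --- that identity is simply the definition of the move $x\xrightarrow{s}x'$.
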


\subsection{}
In the sequel, we often use the following property of the Bruhat order: if $\alpha\in\Phi^+$ with corresponding reflection $s_\alpha$, and $w\in W$, then
\[
w s_\alpha > w \text{ if and only if } w\alpha > 0.
\]

\begin{lemma}\label{prelim-lemma}
Let $w,y\in W$ such that $w\alpha < 0$ for every $\alpha\in\Phi^+$ with $y^{-1}\alpha < 0$. Then $\ell(wy) = \ell(w)-\ell(y)$.
\end{lemma}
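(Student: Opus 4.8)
The plan is to argue by induction on $\ell(y)$. Throughout it is convenient to write $N(v)=\{\alpha\in\Phi^+:\ v\alpha<0\}$, so that $\ell(v)=|N(v)|$ and the hypothesis reads $N(y^{-1})\subseteq N(w)$. If $\ell(y)=0$ there is nothing to prove. For the inductive step I would pick a simple reflection $s_i$ that is a right descent of $y$, i.e. $ys_i<y$, which by the Bruhat order criterion recalled just above means $y\alpha_i<0$; set $y'=ys_i$, so $\ell(y')=\ell(y)-1$ and $y=y's_i$.

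First I would check that the pair $(w,y')$ again satisfies the hypothesis, i.e. $N(y'^{-1})\subseteq N(w)$. Let $\beta\in\Phi^+$ with $y'^{-1}\beta<0$; since $y'^{-1}=s_iy^{-1}$ this says $s_i(y^{-1}\beta)<0$. Either $y^{-1}\beta<0$, in which case $\beta\in N(y^{-1})\subseteq N(w)$ by hypothesis; or $y^{-1}\beta>0$, and then $s_i(y^{-1}\beta)<0$ forces $y^{-1}\beta=\alpha_i$ (a simple reflection carries only $\alpha_i$ from $\Phi^+$ to $\Phi^-$), whence $\beta=y\alpha_i<0$, contradicting $\beta\in\Phi^+$. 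So $\beta\in N(w)$ in every case, and the inductive hypothesis applies to $(w,y')$, giving $\ell(wy')=\ell(w)-\ell(y')=\ell(w)-\ell(y)+1$.

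It then remains to see that the extra right multiplication in $wy=(wy')s_i$ decreases the length, i.e. that $(wy')\alpha_i<0$, equivalently $w(y'\alpha_i)<0$. Here $y'\alpha_i=ys_i\alpha_i=-y\alpha_i$, which is a positive root because $y\alpha_i<0$, and $y^{-1}$ sends it to $-\alpha_i<0$; hence $-y\alpha_i\in N(y^{-1})\subseteq N(w)$, so $w(y'\alpha_i)<0$. Therefore $\ell(wy)=\ell(wy')-1=\ell(w)-\ell(y)$, completing the induction.

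The only points needing care are the two bookkeeping steps — that the hypothesis survives the passage from $y$ to $ys_i$ (handled by the observation that a simple reflection moves exactly one positive root to a negative one, together with the choice $y\alpha_i<0$), and the final sign computation $ys_i\alpha_i=-y\alpha_i\in N(y^{-1})$; everything else is formal. An alternative, less hands-on route would use the identity $\ell(ab)=\ell(a)+\ell(b)-2\,|N(a)\cap N(b^{-1})|$ with $a=wy$, $b=y^{-1}$: the hypothesis $N(y^{-1})\subseteq N(w)$ is readily seen to be equivalent to $N(wy)\cap N(y)=\emptyset$, and the identity then gives $\ell(w)=\ell(wy)+\ell(y)$ directly.
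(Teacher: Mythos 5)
Your argument is correct. It is the same induction on $\ell(y)$ as in the paper, but you peel off a right descent $y=y's_i$ and keep $w$ fixed (applying the inductive hypothesis to $(w,y')$ and then checking separately that $(wy')s_i<wy'$), whereas the paper peels off a left descent $y=sy'$ and passes to the pair $(ws,y')$, so that both $\ell(w)$ and $\ell(y)$ drop by one simultaneously and no final descent check is needed. The two routes are bookkeeping variants of one another; your closing remark via the inversion-set identity $\ell(ab)=\ell(a)+\ell(b)-2|N(a)\cap N(b^{-1})|$ (with the observation $N(y^{-1})\subseteq N(w)\iff N(wy)\cap N(y)=\emptyset$) is also correct and gives a clean non-inductive alternative.
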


\begin{proof}
We proceed by induction on $\ell(y)$, the case $\ell(y)=0$ being clear. Write $y = sy'$, where $s$ is a simple reflection and $\ell(y')<\ell(y)$. Let $\alpha$ denote the simple root corresponding to $s$. One easily checks, using the above-mentioned property, that the pair $w' = ws$, $y'$ satisfies the induction hypothesis. Since $sy < y$ we have $y^{-1}\alpha < 0$, so $w\alpha < 0$ by assumption, and we obtain $w'=ws < w$. Altogether we have
\[
\ell(wy) = \ell(w'y') = \ell(w') - \ell(y') = \ell(w) -1-\ell(y') = \ell(w)-\ell(y).
\]
\end{proof}

\section{Proof of Reuman's conjecture}

\subsection{Outline of the proof}
\label{sec:outline}

We first state the result and give an outline of our strategy.
Throughout this chapter, we fix $b\in G(L)$, and we assume that whenever we consider $X_x(b)$, then $x$ and $b$ are in the same connected component of $G(L)$.


We consider the following maps from the extended affine Weyl group $\widetilde{W}$ to the finite Weyl group $W$:
\begin{align}
& \eta_1\colon \widetilde{W} = X_*(T) \rtimes W \rightarrow W, \text{ the projection}\\
& \eta_2, \text{ where $\eta_2(x)$ is the unique element $v$ such that $v^{-1}x\in {^S}\widetilde{W}$}\\
& \eta(x) = \eta_2(x)^{-1}\eta_1(x)\eta_2(x).
\end{align}

So if $x=vt^\mu w$ with $\mu$ dominant, $v\in W$, $w\in {}^{I(\mu)}W$, then $\eta_1(x)=vw$, $\eta_2(x)=v$, and $\eta(x) = wv$.
Furthermore, for $x\in \widetilde{W}$ (as always, in the same ``connected component'' as the fixed $b\in G(L)$) we define the \emph{virtual dimension}:
\[
d(x) = \frac 12 \big( \ell(x) + \ell(\eta(x)) -\defect(b)  \big).
\]
As discussed above, it is conjectured in \cite{GHKR2} that $\dim X_x(b)=d(x)$ for $b$ basic, $x\in \tW'$ with $X_x(b)\ne \emptyset$.

\begin{theorem}\label{thm1} 
Let $x\in \tW$ and assume that $\eta_2(x)=w_0$ or that the translation part of $x$ is given by a regular coweight. Then $\dim(X_x(b)) \le d(x)$.
\end{theorem}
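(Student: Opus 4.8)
The plan is to argue by induction on $\ell(x)$, the main tools being the reduction method of Deligne and Lusztig in the form of Corollary~\ref{Lemma-DLReduction}, the invariance of $X_x(b)$ under conjugation by a length-zero element, and the analysis from \cite{He} of conjugacy classes in $\tW$ under the elementary steps $\xrightarrow{s}$. First I would single out the operations that leave $\dim X_x(b)$ unchanged: conjugation by $\tau$ with $\ell(\tau)=0$, which gives an isomorphism $X_x(b)\cong X_{\tau x\tau^{-1}}(b)$, and the passage $x\mapsto sxs$ when $\ell(sxs)=\ell(x)$, which is a universal homeomorphism by Corollary~\ref{Lemma-DLReduction}(1). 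A direct computation with the normal form $x=vt^\mu w$ shows that $d(x)$, equivalently $\ell(\eta(x))$, is likewise unchanged along such a step. Using these steps together with the results of \cite{He} --- every $x$ is $\tilde\to$-linked to a minimal-length element of its conjugacy class, and all minimal-length elements of a class are connected by steps of the above type --- one reduces to the case that $x$ has minimal length in its $\tW$-conjugacy class $\mathcal O$, keeping track meanwhile of which form of the hypothesis (``$\eta_2(x)=w_0$'' or ``the translation part of $x$ is regular'') remains in force.

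For the base case, $x$ of minimal length in $\mathcal O$, I would describe $X_x(b)$ directly. For such $x$ the double coset $IxI$ is as rigid as possible, and $X_x(b)$ is either empty or fibres over a classical Deligne--Lusztig variety attached to $x$ in a Levi subgroup of $G$; one checks in the nonempty case that $\dim X_x(b)=d(x)$, the term $\ell(\eta(x))$ supplying the dimension of the Deligne--Lusztig part and $\defect(b)$ entering through the comparison of the $\sigma$-centralizers of $b$ and of a translation element representing $\mathcal O$ (both basic). This is the only place where the precise nature of $\defect(b)$ enters.

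For the inductive step, with $x$ normalized and not of minimal length in $\mathcal O$, there is $s\in\tS$ with $\ell(sxs)=\ell(x)-2$, and Corollary~\ref{Lemma-DLReduction}(2) yields
\[
\dim X_x(b)=\max\bigl(\dim X_{sxs}(b),\ \dim X_{sx}(b)\bigr)+1 .
\]
Since $\ell(sxs)=\ell(x)-2$ and $\ell(sx)=\ell(x)-1$, it suffices to choose such an $s$ for which $sxs$ and $sx$ still lie within the scope of the induction and for which $d(sxs)\le d(x)-1$ and $d(sx)\le d(x)-1$; unwinding the definition of $d$, these inequalities read $\ell(\eta(sxs))\le\ell(\eta(x))$ and $\ell(\eta(sx))\le\ell(\eta(x))-1$. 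One analyses them via the criterion for the Bruhat order recalled above and Lemma~\ref{prelim-lemma}: the affine reflection $s_0$ plays the decisive role, finite simple reflections serving essentially only to normalise $x$ further (a finite $s$ satisfying $\ell(\eta(sx))\le\ell(\eta(x))-1$ forces $\ell(sxs)=\ell(x)$). It is precisely in guaranteeing the availability of a suitable $s_0$-reduction that the hypothesis ``$v=w_0$ or $\mu$ regular'' is indispensable --- without it $x$ could fail to be minimal while admitting only length-preserving or dimension-nondecreasing reductions.

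I expect the main obstacle to be exactly this combinatorial heart: to prove, for every non-minimal normalized $x$ satisfying the hypothesis, the existence of a reduction $s$ with the two $\eta$-inequalities above and with $sxs$, $sx$ still amenable to the induction --- the two forms of the hypothesis presumably handled by parallel arguments, with the case $v=w_0$ reduced to, or run in tandem with, the regular case. The base case, although conceptually separate, also requires genuine geometric input on $X_x(b)$ for minimal $x$ rather than pure combinatorics, and is the second place where real work is concentrated.
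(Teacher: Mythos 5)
Your proposal takes a genuinely different route from the paper's, and there is a substantial gap. The paper never reduces to minimal-length elements for the upper bound, and it never makes the claim on which your base case rests. Its actual proof is much shorter: for $x=vt^\mu w$ with $v=w_0$ or $\mu$ regular, Theorem~\ref{thm3}~(1) establishes $w_0t^\mu\Rightarrow x$, i.e.\ $\dim X_{w_0t^\mu}(b)-d(w_0t^\mu)\ge \dim X_x(b)-d(x)$, by repeated applications of Lemma~\ref{X2} with \emph{finite} simple reflections only (no $s_0$-steps); and then for the extremal element $w_0t^\mu$, Lemma~\ref{Lemma-UpperBoundFromGrass} gives $\dim X_{w_0t^\mu}(b)\le d(w_0t^\mu)$ by projecting $\Flag\to\Grass$ and invoking the known dimension formula in the affine Grassmannian from \cite{GHKR1}, \cite{Viehmann-dim}. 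The ``ceiling'' therefore comes from an external geometric input about the affine Grassmannian, not from any intrinsic induction inside the affine flag variety. Your proposal has no analogue of this input.

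The missing step in your plan is the base case: you assert that for $x$ of minimal length in its $\tW$-conjugacy class, $X_x(b)$ fibres over a classical Deligne--Lusztig variety in a Levi and that $\dim X_x(b)=d(x)$ when nonempty. This is a strong claim --- essentially a special case of the dimension formula itself --- and it is neither proved in the paper nor available from the tools cited; the paper is structured precisely so as to avoid having to understand $X_x(b)$ for minimal-length $x$. The inductive step also has problems: the hypothesis ``$\eta_2(x)=w_0$ or $\mu$ regular'' is not preserved (if only $v=w_0$ holds, then $\eta_2(sx)=sw_0\ne w_0$ for finite $s$; if $s=s_0$ then the translation part changes, so ``$\mu$ regular'' can fail too), and the paper's Lemma~\ref{Lemma-VirtualDim2'}, which controls $d(sx)$, is proved only for $s\in S$ finite --- there is no analogous statement for $s_0$, and you would need one. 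Finally, your parenthetical remark that a finite $s$ with $\ell(\eta(sx))\le\ell(\eta(x))-1$ forces $\ell(sxs)=\ell(x)$ is incorrect: Lemma~\ref{Lemma-VirtualDim2'} shows the opposite, namely that any finite $s$ with $\ell(sxs)=\ell(x)-2$ already yields $\ell(\eta(sx))\le\ell(\eta(x))-1$.
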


\begin{theorem}\label{thm1'} 
Assume that $b$ is basic. Let $x\in\widetilde{W}'$ such that $\supp(\eta(x))=S$. Then $X_x(b) \ne \emptyset$.
\end{theorem}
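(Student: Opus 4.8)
The goal is to show non-emptiness of $X_x(b)$ for $b$ basic and $x = vt^\mu w \in \widetilde W'$ with $\operatorname{supp}(\eta(x)) = S$. The natural strategy is to use the reduction machinery of Section~2 (Corollaries~\ref{Lemma-DLReduction} and~\ref{cor-reduction1}) to reduce the claim to a single, well-understood element of $\widetilde W$ whose affine Deligne--Lusztig variety is manifestly non-empty. Concretely, I would try to find, within the class of $x$ under the relations $\xrightarrow{s} $ and $\tilde\to$, a nice representative: either a translation element $t^\nu$ with $\nu$ in the $\sigma$-conjugacy class' ``expected'' position, or more plausibly a length-$0$ element times a Coxeter-type element. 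The point of Corollary~\ref{cor-reduction1} is that $x \to x'$ together with $X_{x'}(b) \neq \emptyset$ gives $X_x(b) \neq \emptyset$; and Lemma~\ref{cor-reduction1}'s companion conjugation lemma lets us move freely within a $\tilde\approx$-class. So the real content is a statement about conjugacy classes in $\widetilde W$.

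\textbf{Key steps.} First, I would invoke He's results (\cite{He}) on conjugacy classes in the extended affine Weyl group: every element $x$ is $\tilde\to$-equivalent (indeed can be reduced) to an element of minimal length in its conjugacy class, and minimal-length elements in a fixed conjugacy class are all related by $\tilde\approx$. Second, I would identify the conjugacy class of $x$: the condition $x \in \widetilde W'$ (the lowest two-sided cell) together with $\operatorname{supp}(\eta(x)) = S$ should force the conjugacy class of $x$ to be one whose minimal-length representatives are of a controlled form — I expect these to be ``$\sigma$-straight'' or close to it, i.e.\ elements $\tau$ with $\ell(\tau^n) = n\ell(\tau)$, for which the affine Deligne--Lusztig variety is classically non-empty (this is where the hypothesis that $b$ and $x$ lie in the same connected component, i.e.\ have the same image under $\kappa$, enters). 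Third, for such a minimal representative $\tau$, I would check $X_\tau(b) \neq \emptyset$ directly: if $\tau$ has length $0$ then $X_\tau(b) \ne \emptyset$ iff $\tau$ is in the $\sigma$-conjugacy class of $b$ — which for $b$ basic is controlled purely by $\kappa(\tau) = \kappa(b)$, already assumed; if $\tau$ has positive length one chops off simple reflections one at a time using Corollary~\ref{Lemma-DLReduction}(2), noting that each such step preserves non-emptiness because the bundle fibers ($\mathbb A^1$ or $\mathbb A^1 \setminus \{0\}$) are non-empty. Finally, chaining $x \to \tau$ (or $x \tilde\approx$ something reducing to $\tau$) back through Corollary~\ref{cor-reduction1} yields $X_x(b) \neq \emptyset$.

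\textbf{Main obstacle.} The delicate point is the combinatorial input: showing that the hypotheses $x \in \widetilde W'$ and $\operatorname{supp}(\eta(x)) = S$ really do pin down the conjugacy class of $x$ tightly enough that its minimal-length representative is of the desired reducible-to-length-$0$ (or otherwise obviously non-empty) type, \emph{and} that this representative sits in the correct $\sigma$-conjugacy class. In particular one must verify that the reduction process $x \to \dots$ does not accidentally leave the ``good'' range before reaching the target, and that the connected-component hypothesis is preserved and suffices. I expect this to require a careful analysis, using the explicit formula $\eta(x) = wv$ and the defining inequality $\langle \mu, \alpha_i\rangle + \delta_{v\alpha_i} - \delta_{w^{-1}\alpha_i} \neq 0$ of $\widetilde W'$, of how the pair $(\operatorname{supp}$ condition, lowest-cell condition$)$ interacts with He's classification of conjugacy classes and their minimal-length elements — this is the step where the bulk of the work lies, and where I would lean most heavily on \cite{He}.
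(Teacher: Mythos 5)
Your high-level plan — use the Deligne--Lusztig reduction machinery of Section~2 to push $x$ down to a concrete element whose affine Deligne--Lusztig variety is visibly non-empty, with the connected-component hypothesis providing the $\sigma$-conjugacy input — is the right spirit, but the specific route you sketch does not match the paper's and has real gaps. The paper does \emph{not} pass through He's classification of minimal-length elements in $\widetilde{W}$-conjugacy classes for this statement; it instead proves explicitly, via the auxiliary varieties built from Proposition~\ref{generalized-DL-reduction}, the chain $x \Rightarrow a\,t^{\lambda}$ (Theorem~\ref{thm3}~(3), with $\supp(a)=S$) and then $a\,t^{\lambda} \Rightarrow t^{\lambda} c$ with $c$ a Coxeter element (Theorem~\ref{thm3}~(2), an induction on $\supp(a)$). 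The terminal element $t^{\lambda}c$ (with $\lambda\ne 0$) is not a minimal-length element in its $\widetilde{W}$-conjugacy class; its non-emptiness is read off from the crucial Lemma~\ref{Lemma-ADLVForCuspidal} (Lemma~9.3.3 of \cite{GHKR2}): since a Coxeter element is cuspidal, $t^{\lambda}c$ is actually $\sigma$-conjugate to the basic $b$ with matching $\kappa$. Your proposal never identifies this lemma, and without it the argument does not close — you gesture at ``$\kappa(\tau)=\kappa(b)$ controls $\sigma$-conjugacy'' only for length-$0$ elements, but the reduction does not land there.

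Two further points are off. First, ``if $\tau$ has positive length one chops off simple reflections one at a time using Corollary~\ref{Lemma-DLReduction}(2)'' cannot be done for a minimal-length $\tau$: minimality precisely means no conjugation $s\tau s$ drops the length, so case~(2) of the corollary never applies to $\tau$; that reduction must be performed on the way from $x$ down to the terminal element, not afterwards. Second, the paper invokes He's conjugacy-class results (via \cite[Prop.~6.7]{He}) only for the \emph{lower} bound in classical type (Theorem~\ref{thm3}~(4)); since Theorem~\ref{thm1'} must hold for every quasi-simple $G$, including exceptional types, an argument hinging on those propositions would have to justify their applicability in general. Finally, be careful to distinguish $\widetilde{W}$-conjugacy (which governs $\tilde\to$ and minimal-length theory) from $\sigma$-conjugacy in $G(L)$ (which governs whether $X_x(b)$ is non-empty): the proposal elides the two in several places.
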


This proves the non-emptiness statement in Conjecture~9.5.1~(a) of \cite{GHKR2}. Together with op.~cit., Proposition~9.5.4, which states that the converse of the theorem holds as well, this completely settles the emptiness versus non-emptiness question for basic $b$ and $x$ in the shrunken Weyl chambers $\tW'$.  The next theorem proves that the dimension of $X_x(b)$ is at least as large as predicted by the conjecture if $x\in\tW'$, and $G$ is a classical group and $b=1$ or $G$ is of type $A_n$:

\begin{theorem}\label{thm2}
(1) Let $G$ be a classical group, $x \in W_a$ such that $\supp(\eta(x))=S$. If moreover, $x \in \tW'$ or $\eta(x)$ is a Coxeter element of $W$, then $\dim X_x(1) \ge d(x)$.

(2) Let $G=PGL_n$ and $\t \in \widetilde{W}$ with $\ell(\t)=0$. Let $x \in W_a \t$ such that $\supp(\eta(x))$. If moreover, $x \in \tW'$ or $\eta(x)$ is a Coxeter element of $W$, then $\dim X_x(\t) \ge d(x)$.
\end{theorem}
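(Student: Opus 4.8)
The plan is to establish the lower bound $\dim X_x(b)\ge d(x)$ by reducing, via Corollary~\ref{cor-reduction1}, to a single well-chosen element in the $\tilde\to$-class of $x$ whose affine Deligne-Lusztig variety we can analyze directly. First I would use the classification of $\sigma$-conjugacy classes, or rather the conjugacy-class results of the second named author in \cite{He}, to find a ``minimal'' element $x'$ with $x\,\tilde\to\,x'$: concretely, for $x\in W_a\tau$ (with $\tau$ of length $0$) one expects $x\,\tilde\approx\,x'$ for some $x' = \tau'$ of length $0$ times a product that is in some sense as short as possible, but whose $\eta$-image still has full support $S$ --- in fact one wants $x'$ to be (a translate by a length-$0$ element of) a Coxeter element, or more precisely an element whose ``finite part'' is a Coxeter element of $W$ and whose translation part is controlled. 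The two hypotheses ``$x\in\tilde W'$'' and ``$\eta(x)$ is a Coxeter element'' are exactly the two regimes in which one can arrange this reduction: in the shrunken case one has enough room to slide $x$ down; in the Coxeter case the element is already essentially minimal.

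The second step is the base case: compute (or bound below) $\dim X_{x'}(b)$ for the minimal $x'$. Here the restriction to classical groups and $b=1$ (resp.\ type $A_n$) should enter. For a Coxeter-type element one expects $X_{x'}(b)$ to be nonempty --- this is guaranteed by Theorem~\ref{thm1'} since $\supp(\eta(x))=S$, and $\eta$ is constant on $\tilde\approx$-classes up to $W$-conjugacy so $\supp(\eta(x'))=S$ too --- and one wants to show its dimension is exactly $d(x')$, or at least $\ge d(x')$. For $b=1$ in a classical group, the natural approach is an explicit description of $X_{x'}(1)$, e.g.\ as (an open subset of) an iterated affine-space bundle over a point or over a classical Deligne-Lusztig variety for a Coxeter element, using a direct matrix/lattice computation in the relevant type ($A$, $B$, $C$, $D$). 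In type $A_n$ with arbitrary basic $b$ one can either twist the $b=1$ computation by $\tau$ or invoke the explicit combinatorics of periodic lattice chains. Then I would check that $d$ is compatible with the reduction in the sense of Corollary~\ref{cor-reduction1}: each step $x\xrightarrow{s} sxs$ with $\ell(sxs)=\ell(x)-2$ changes $\dim X$ by (at least) $1$ and also changes $d$ by exactly $1$, because $\ell$ drops by $2$ while $\eta(x)$, hence $\ell(\eta(x))$, is preserved up to conjugacy by a length-$0$ element; so the inequality $\dim X_{x'}(b)\ge d(x')$ propagates back up to $\dim X_x(b)\ge d(x)$.

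Third, I would need to verify that the chain of $\tilde\to$-moves can be taken entirely inside $\tilde W'$ (so that the hypotheses of the relevant known results and of Theorem~\ref{thm1'} remain available at every stage), or else argue that leaving $\tilde W'$ momentarily does no harm; this is a combinatorial bookkeeping point about the $*$-operation and the length formula $\ell(vt^\mu w)=\ell(v)+\ell(t^\mu)-\ell(w)$ from Section~\ref{sec:defW'}, combined with the characterization of $\tilde W'$ in terms of $\langle\mu,\alpha_i\rangle+\delta_{v\alpha_i}-\delta_{w^{-1}\alpha_i}\ne 0$.

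The main obstacle I expect is the base case in types $B$, $C$, $D$: unlike type $A$, where one has the clean dictionary with lattice chains and can write $X_{x'}(1)$ very explicitly, the orthogonal and symplectic cases require handling the bilinear form and the resulting non-split behaviour of $\mathrm{def}_G(b)$, and one must check that the explicit variety really has dimension $\ge\frac12(\ell(x')+\ell(\eta(x'))-\mathrm{def}_G(1))$ rather than something smaller --- in particular that no unexpected drop occurs from the form constraints. A secondary difficulty is making the reduction $x\,\tilde\to\,x'$ effective and uniform: showing that for \emph{every} $x$ in the two allowed regimes there is such a minimal $x'$ with $\supp(\eta(x'))=S$ still, which is where the input from \cite{He} on elliptic conjugacy classes in affine Weyl groups is essential and must be cited carefully.
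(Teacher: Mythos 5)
Your outline misses the central mechanism of the paper's proof. The paper does not reduce via the conjugacy relation $\tilde\to$ alone; it introduces the sharper relation $\Rightarrow$ (which tracks the \emph{virtual dimension} $d$, not just $\ell$) and establishes it by a direct geometric construction. Concretely, the chain in the paper is $x \Rightarrow a t^\lambda \Rightarrow t^\lambda c \Rightarrow c'$ for some Coxeter element $c'\in W$, where the first step (Theorem~\ref{thm3}~(3)) is \emph{not} a sequence of $\tilde\to$ moves: it is obtained by factoring $x=x_1x_2$ with $\ell(x)=\ell(x_1)+\ell(x_2)$, cyclically shifting to $x_2x_1$, and then repeatedly applying Proposition~\ref{generalized-DL-reduction} with the $*$-operation, all while keeping explicit control of fiber dimensions. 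The element $a=(y^{-1}z)*(w'y)$ is carefully engineered so that $\supp(a)=S$ and so that the total dimension loss is exactly $d(x)-d(at^\gamma)$. Your proposal has no analogue of this step and instead tries to do everything by $\tilde\to$.

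The specific statement in your second paragraph, that under $x\to sxs$ the element $\eta(x)$ ``is preserved up to conjugacy by a length-$0$ element,'' is false, and this is where your approach would break down. When $s\in S$ and $\ell(sxs)=\ell(x)-2$, Lemma~\ref{Lemma-VirtualDim2'} shows $d(x)\ge d(sx)+1$ with equality \emph{if and only if} $\ell(\eta(sx))=\ell(\eta(x))-1$; in general $\eta(x)$ changes under such moves in a way that depends delicately on the decomposition $x=vt^\mu w$, and $\ell(\eta(x))$ is certainly not an invariant of the $\tilde\approx$-class. Corollary~\ref{cor-reduction1} gives $\dim X_x(b)-\dim X_{x'}(b)\ge\frac12(\ell(x)-\ell(x'))$, but this matches $d(x)-d(x')$ only when $\ell(\eta)$ is constant (Lemma~\ref{X3}); otherwise the inequality can go the wrong way relative to $d$. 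The whole design of $\Rightarrow$, Lemmas~\ref{X2}/\ref{X3}, and the explicit construction in Theorem~\ref{thm3}~(3) exists precisely to circumvent this. A secondary point: the base case in the paper is simply $\dim X_c(1)=\ell(c)$ for $c\in W$ (via \cite[Lemma 9.7, Prop 10.3]{He}), applied after He's combinatorial result \cite[Prop 6.7~(1)]{He} reduces a Coxeter-type element to its finite part; no explicit matrix or lattice computation in types $B,C,D$ is needed, and the ``classical group'' hypothesis enters only through that combinatorial input, not through a geometric base-case analysis.
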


The idea of the proofs of these theorems is to relate the given element $x$ to other elements for which non-emptiness, a lower bound on the dimension, or an upper bound on the dimension, respectively, are known. These relations will mainly be shown using the reduction method of Deligne and Lusztig. To this end, we introduce the following notation:

\begin{defn}
Let $x, y\in \widetilde{W}$ such that $x$, $y$ are in the same $W_a$-coset. We write $x\Rightarrow y$ if for every $b$,
\[
\dim X_x(b)-d(x)\ge \dim X_y(b)-d(y).
\]
\end{defn}

Here by convention, we set the dimension of the empty set to be $-\infty$. If the right hand side is $-\infty$ then the inequality holds regardless of the left hand side. In the definition (and in the theorem below) we do not assume that $b$ is basic. This is consistent with the expectation that whenever $x\in \tW'$ and $X_x(b)\ne\emptyset$, the difference $\dim X_x(b) - d(x)$ is a constant depending only on $b$, but not on $x$.

Note that this relation is transitive: If $x\Rightarrow y$, $y\Rightarrow z$, then $x\Rightarrow z$. By definition, if $x\Rightarrow y$ and $X_y(b)\ne\emptyset$, then $X_x(b)\ne\emptyset$. In this case, the lower bound $\dim X_y(b)\ge d(y)$ implies the analogous bound for $x$, while the validity of the upper bound $\dim X_x(b)\le d(x)$ implies the corresponding statement for $y$. We prove the following statements about the relation $\Rightarrow$:

\begin{theorem}\label{thm3}
\begin{enumerate}
\item 
Let $\mu$ be a dominant coweight, $v \in W$ and $w \in {}^{I(\mu)} W$. Assume that $v=w_0$ or that $\mu$ is regular. Then
\[
w_0t^\mu \Rightarrow v t^\mu w.
\]
\item
Let $a\in W$ with $\supp(a)=S$, and let $\mu\ne 0$ be a dominant coweight. Then there exists a Coxeter element $c\in W$ such that 
\[
a t^\mu \Rightarrow t^\mu c.
\]
\item
Assume that $x \in \tW'$, and that $\supp(\eta(x))$. Then there exist a dominant coweight $\lambda$ and $a \in W$ with $\supp(a)=S$ such that
\[
x \Rightarrow a t^\lambda.
\]
\item
Assume that $G$ is a classical group and $x \in W_a$ with $\eta(x)$ a Coxeter element of $W$, then 
\[ 
x \Rightarrow \eta(x).
\]
\end{enumerate}
\end{theorem}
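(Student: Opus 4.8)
All four assertions are instances of one relation, $y \Rightarrow z$ with $y,z$ in a common $W_a$-coset, and the plan is to prove them by feeding relations in $\widetilde W$ into the reduction method of Deligne and Lusztig. The underlying mechanism is that a chain $y \tilde\to z$ produces, via Corollary~\ref{cor-reduction1}, the inequality $\dim X_y(b) - \dim X_z(b) \ge \tfrac12(\ell(y)-\ell(z))$ for every $b$ with $X_z(b)\ne\emptyset$; since
\[
d(y)-d(z) = \tfrac12\big(\ell(y)-\ell(z)\big) + \tfrac12\big(\ell(\eta(y))-\ell(\eta(z))\big),
\]
this already gives $y\Rightarrow z$ when $\ell(\eta(y))=\ell(\eta(z))$, and otherwise one must use the sharper fibre-dimension estimates of Proposition~\ref{generalized-DL-reduction} and Corollary~\ref{Lemma-DLReduction}, in which certain steps contribute strictly more than one half, to absorb the discrepancy. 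So the work in each part is to exhibit a suitable chain of conjugations and elementary reductions and then to control the lengths, for which Lemma~\ref{prelim-lemma} and the $*$-operation are the main tools.

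For part~(1), writing $x=vt^\mu w$ and using $\ell(vt^\mu w)=\ell(v)+\ell(t^\mu)-\ell(w)$ one has $\ell(w_0t^\mu)=\ell(w_0)+\ell(t^\mu)\ge\ell(x)$, so a reduction $w_0t^\mu\tilde\to x$ is available; the point is that when $v=w_0$ or $\mu$ is regular one can arrange the chain so that enough steps are of the ``better than one half'' kind to cover the term $\tfrac12(\ell(w_0)-\ell(wv))$ appearing in the formula above. For part~(3), with $x=vt^\mu w\in\widetilde W'$ and $\supp(wv)=S$, I would first conjugate $x$ to $t^\mu(wv)$ and then run the reduction method, steered by the genericity encoded in the definition of $\widetilde W'$ (the inequalities $\langle\mu,\alpha_i\rangle+\delta_{v\alpha_i}-\delta_{w^{-1}\alpha_i}\ne0$), so as to bring the translation part into dominant position and to shrink the ``$w$''-factor to the identity, arriving at an element $a t^\lambda$ with $\lambda$ dominant and $\supp(a)=S$, full support being preserved throughout.

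Parts~(2) and~(4) in addition use the theory of conjugacy classes in affine Weyl groups of \cite{He}. For~(4): writing $x=vt^\mu w$, conjugation by a representative of $v^{-1}$ shows that $x$ lies in the $\widetilde W$-conjugacy class $\mathcal{O}$ of $t^\mu c$, where $c:=\eta(x)$ is a Coxeter element. Since $c$ is elliptic, $\sum_{i=0}^{h-1}c^i=0$ (with $h$ the order of $c$), so $(t^\mu c)^h=1$; thus $\mathcal{O}$ consists of finite-order elements and meets $W$, and since the finite part of an element is invariant under conjugacy up to $W$-conjugacy, $\mathcal{O}\cap W$ is precisely the Coxeter conjugacy class of $W$, all of whose elements have length $|S|$. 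By the results of \cite{He} the set $\mathcal{O}_{\min}$ then equals this Coxeter class, every element of $\mathcal{O}$ admits a chain $\tilde\to$ to an element of $\mathcal{O}_{\min}$, and any two elements of $\mathcal{O}_{\min}$ are $\tilde\approx$; since $\eta(x)\in\mathcal{O}_{\min}$ and $\eta(\eta(x))=\eta(x)$ (as $\eta(x)\in W$), the mechanism of the first paragraph gives $x\tilde\to\eta(x)$ and hence $x\Rightarrow\eta(x)$. Part~(2) is similar: starting from $at^\mu$ with $\supp(a)=S$ and $\mu\ne0$ dominant, I would reduce towards elements $t^\mu c'$ and appeal to \cite{He} to choose a Coxeter $c$ with $at^\mu\tilde\to t^\mu c$, the inequality $\ell(a)\ge|S|$ providing the slack needed for the bookkeeping.

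The main obstacle is precisely this fine control. In~(1) and~(3) one must verify that the reduction chains can be chosen so that the ``extra halves'' accumulate exactly where they are needed, which is delicate and is the real content of the hypotheses ``$v=w_0$ or $\mu$ regular'' and ``$x\in\widetilde W'$''. In~(2) and~(4) one must make the conjugacy-class statements of \cite{He} explicit --- the identification of $\mathcal{O}$, of its minimal length elements, and the reducibility $x\tilde\to\eta(x)$ --- and it is here that the restriction to classical $G$ enters, since it is for types $A$--$D$ that \cite{He} supplies the explicit combinatorial description of conjugacy classes (via signed cycle types) and of their minimal length elements. A final subtlety worth noting is that $\widetilde W=W_a\rtimes\Omega$ has a nontrivial group $\Omega$ of length-$0$ elements, so that moving a finite-order element from the stabiliser of an arbitrary special vertex into $W$ (the stabiliser of the base vertex) may require conjugation by an element of $\Omega$ --- which is permitted in $\tilde\to$ and does not affect the length count.
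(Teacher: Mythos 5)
Your account of part~(4) matches the paper, which simply cites \cite[Prop.~6.7(1)]{He} for $x\tilde\to\eta(x)$ and concludes by Lemma~\ref{X3}; the extra reasoning you give about $\mathcal{O}$ being of finite order and meeting $W$ is a plausible (if unverified) gloss on why that proposition holds. But the other three parts have genuine gaps. For part~(1) you assert that ``a reduction $w_0t^\mu\tilde\to x$ is available'' without exhibiting one, and in fact the paper does not produce a $\tilde\to$ chain at all: for $v=w_0$ it peels simple reflections off $w$ on the right using Lemma~\ref{X2}(2), which requires verifying at each step that $\ell(\eta(\cdot))$ drops by exactly one, and for $\mu$ regular it first applies the $v=w_0$ case to $w_0t^\mu wvw_0$ and then conjugates by $vw_0$ via Lemma~\ref{X3}, which is length-preserving on $\eta$ precisely because $\mu$ is regular. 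For part~(2) you claim the argument is ``similar'' to~(4) and appeal to \cite{He}; this is a misidentification. The paper's proof of~(2) is a self-contained induction on $|\supp(v)|$, proving the stronger statement that $vt^\mu w\Rightarrow t^\mu c$ whenever $w$ is a Coxeter element of $W_{S\setminus\supp(v)}$, and it invokes no conjugacy-class results --- which is exactly why part~(2) holds for all quasi-simple $G$ with no classical-type restriction. The induction also has a delicate terminal case where one shows that every $s_{i_j}$ commutes with $t^\mu w$, forcing $J=\emptyset$ or $J=S$ by quasi-simplicity; none of this is anticipated in your sketch.

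For part~(3) your proposed first move --- conjugating $x=vt^\mu w$ to $t^\mu(wv)$ --- is not a legal reduction step in general: conjugation by $v^{-1}$ need not be expressible as a sequence of length-non-increasing elementary conjugations, and $\ell(t^\mu wv)$ can exceed $\ell(x)$. The actual proof is of a quite different nature: it builds $J=\{i:s_iw<w\}$, $J'=I(\mu-\rho^\vee_J)$, factorizations $v=v'w^0_{J'}$ and $wv=w'z$, a coweight $\gamma$ with $\mu-\rho^\vee_J+(w')^{-1}\rho^\vee_J=y\gamma$, and crucially the element $a=(y^{-1}z)*(w'y)$ via the Demazure product; it then runs a five-stage fibration argument ($X_1,\dots,X_5$), exploiting \emph{both} cases of Proposition~\ref{generalized-DL-reduction} --- the $ww'$ estimate for the universal-homeomorphism step and the $w*w'$ estimate to absorb the length defect --- rather than just Corollary~\ref{cor-reduction1}. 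That construction, together with the length identities~(a) and~(b) verified via Lemma~\ref{prelim-lemma}, is the substantive content of part~(3), and a ``steered'' reduction chain is not a substitute for it.
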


Now the non-emptiness statement in Theorem~\ref{thm1'} follows from Theorem~\ref{thm3}~(2) \& (3) and the following lemma (Lemma 9.3.3 in \cite{GHKR2}), because Coxeter elements obviously are cuspidal.

\begin{lemma}
\label{Lemma-ADLVForCuspidal}
Let $\mu\in Y$, and let $w\in W$ be a cuspidal element (i.~e.~the conjugacy class of $w$ does not meet any standard parabolic subgroup), let $x  =t^\mu w$, and let $b$ be basic with $\kappa_G(b)=\kappa_G(x)$. Then $x$ is $\sigma$-conjugate to $b$, and in particular $X_x(b) \ne \emptyset$.
\end{lemma}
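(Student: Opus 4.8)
The plan is to exhibit an explicit element of $G(L)$ in $\sigma$-conjugate position to $x=t^\mu w$, and then deduce non-emptiness from the equality of $\sigma$-conjugacy classes. More precisely, the cleanest route is to show that the length $0$ element determined by $\kappa_G(x)$ lies in the same $\sigma$-conjugacy class as $x$, and for this it suffices to find a sequence $x=x_0,x_1,\dots,x_n=\tau$ with $\ell(\tau)=0$ connecting $x$ to that length $0$ element via the elementary operations $x_k=\tau_k x_{k-1}\tau_k^{-1}$ with $\ell(\tau_k)=0$ and $x_{k-1}\xrightarrow{s_i}_\delta x_k$. By the conjugation lemma (Lemma~2.5 above) and Corollary~\ref{cor-reduction1}, if $x\tilde\to x'$ and $X_{x'}(b)\ne\emptyset$ then $X_x(b)\ne\emptyset$; and conversely the operations preserve the $\sigma$-conjugacy class of the corresponding element of $N(T)(L)$ (the conjugation steps obviously do, and an $s_i$-step $x'=s_i x s_i$ with $\ell(x')\le\ell(x)$ does as well, since $s_i x s_i$ and $x$ are $\sigma$-conjugate via $s_i$ when the twist is trivial on the affine Weyl group component). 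So the whole statement reduces to a purely combinatorial claim about the extended affine Weyl group.

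The combinatorial claim is: \emph{if $w\in W$ is cuspidal and $\mu\in Y$, then $t^\mu w$ is $\tilde\to$-equivalent (or at least $\tilde\to$-related in the direction we need) to a length $0$ element.} This is exactly the kind of statement proved by the second named author in \cite{He}. The key input is the classification there of the conjugacy classes of $\tW$ under the relation $\tilde\to$: every element is connected (by a chain of $\tilde\to$ steps that decrease or preserve length) to an element of minimal length in its $\tW$-conjugacy class, and for classes parametrized by a cuspidal $w\in W$ the minimal length elements are precisely the length $0$ elements $\tau$ with $\kappa_G(\tau)=\kappa_G(t^\mu w)$. So I would first recall/cite the relevant structural results from \cite{He} (the existence of minimal length representatives and their description), then verify that cuspidality of $w$ forces the minimal length in the conjugacy class of $t^\mu w$ to be $0$: indeed, if it were positive, a minimal length element would lie in a proper standard parabolic subgroup $W_J$ of $W_a$, and projecting to $W$ would exhibit $w$ (up to conjugacy) inside a proper standard parabolic of $W$, contradicting cuspidality. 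Here one uses the fact that $W_a$-parabolics $W_J$ with $J\subsetneq\tS$ are, after conjugation by a length $0$ element, contained in a finite parabolic $W_{J'}$ with $J'\subsetneq S$.

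Having identified the minimal length element as a length $0$ element $\tau_0$ with $\kappa_G(\tau_0)=\kappa_G(x)$, I then invoke that length $0$ elements with the same image under $\kappa_G$ are conjugate by a length $0$ element (they all correspond to the same point of $\Omega=\tW/W_a$, and conjugation within $\Omega$-cosets is controlled by the automorphism of the affine Dynkin diagram; any two length $0$ elements with the same $\kappa_G$ are in fact equal up to such conjugation when $G$ is adjoint quasi-simple, or at worst differ by a length $0$ conjugation), so $\tau_0$ is $\sigma$-conjugate to the given basic $b$ with $\kappa_G(b)=\kappa_G(x)$ — this last step is just the standard fact that basic classes in a fixed connected component are unique. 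Chaining everything: $x$ is $\sigma$-conjugate to $\tau_0$ (since the chain of operations preserves the $\sigma$-conjugacy class of the representative in $N(T)(L)$, the twist $\sigma$ acting trivially on $\tW$ because $G$ is split), and $\tau_0$ is $\sigma$-conjugate to $b$, hence $x$ is $\sigma$-conjugate to $b$ and $X_x(b)\ne\emptyset$.

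The main obstacle is the precise bookkeeping in the second step: one must be careful that the $\tilde\to$ operations actually preserve (not just relate) the $\sigma$-conjugacy class of the element of $G(L)$ — the subtlety is that $x\xrightarrow{s_i} x'$ requires a representative identity $x' = s_i x s_i$ in $N(T)(L)$ and hence $\sigma$-conjugacy by a lift of $s_i$, which is fine because $\sigma$ fixes $\tW$ pointwise here, but one should state this cleanly. A secondary technical point is confirming that cuspidality of $w\in W$ really does prevent \emph{all} minimal-length elements in the $\tW$-conjugacy class of $t^\mu w$ from having positive length; this is where I would lean most heavily on the explicit results of \cite{He} about conjugacy classes in affine Weyl groups, rather than attempting a self-contained argument.
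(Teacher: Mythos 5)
The paper does not actually reprove this lemma; it cites it as Lemma~9.3.3 of \cite{GHKR2}. So you are inventing a proof from scratch, which is fine in principle, but the route you choose has a genuine gap in the combinatorial step you yourself flag as the one needing the most care.

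Your plan hinges on the claim that for cuspidal $w$ and arbitrary $\mu$, the element $t^\mu w$ is related via $\tilde\to$ to a length-$0$ element of $\tW$. But every step of $\tilde\to$ (conjugation by a length-$0$ element, or $x_{k-1}\xrightarrow{s_i} x_k$ with $x_k=s_ix_{k-1}s_i$) is a $\tW$-conjugation, so $\tilde\to$ never leaves the $\tW$-conjugacy class. Thus your claim would force $t^\mu w$ to be $\tW$-conjugate to a length-$0$ element, and that is false in general. Take $G=PGL_2$, $w=s_1$ (which is cuspidal), $\mu=0$, so $x=s_1$ and $\kappa_G(x)=0$, $b=1$. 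Every $\tW$-conjugate of $s_1$ lies in $W_a$ and has odd length $\ge 1$ (the conjugacy class is $\{s_1, s_0, s_0s_1s_0,\dots\}$), so the minimal length in the class is $1$, not $0$, and $s_1$ is not $\tW$-conjugate to $1$ or to $\tau$. Yet the lemma's conclusion is correct here: $s_1$ \emph{is} $\sigma$-conjugate to $1$ in $G(L)$ (e.g.\ by Lang's theorem, since one can pick a representative of $s_1$ in $G(\mathbb F_q)$). The moral is that $\sigma$-conjugacy in $G(L)$ is strictly coarser, on lifts of $\tW$, than $\tW$-conjugacy; the reduction-method machinery only moves within a single $\tW$-conjugacy class and therefore cannot, by itself, reach a length-$0$ representative. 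So the ``secondary technical point'' you defer to \cite{He} is exactly where the argument breaks; \cite{He}'s results on minimal length elements are consistent with the counterexample above and do not give you a length-$0$ minimal element in this class.

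The intended argument is group-theoretic rather than combinatorial, via Kottwitz's classification of $\sigma$-conjugacy classes by the pair $(\kappa_G,\bar\nu)$. Cuspidal is the same as elliptic, so the fixed subspace $(Y\otimes\QQ)^w$ is central. The Newton vector of $t^\mu w$ is the dominant representative of $\frac{1}{\mathrm{ord}(w)}\sum_{i} w^i\mu$, which lies in $(Y\otimes\QQ)^w$ and hence is central; thus $t^\mu w$ is basic. Since $b$ is basic with $\kappa_G(b)=\kappa_G(t^\mu w)$, and basic classes in a fixed connected component are determined by $\kappa_G$, the two elements are $\sigma$-conjugate, whence $X_x(b)\ne\emptyset$. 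This bypasses $\tilde\to$ entirely and is the content of \cite[Lemma 9.3.3]{GHKR2}.
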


The upper bound on the dimension stated in Theorem~\ref{thm1} follows from Theorem~\ref{thm3}~(1) and the following lemma:

\begin{lemma}
\label{Lemma-UpperBoundFromGrass}
Let $x = w_0t^\mu$, where $\mu$ is a dominant coweight, and $w_0$ is the longest element in $W$. Then $\dim X_x(b) \le d(x)$.
\end{lemma}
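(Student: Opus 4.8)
The plan is to project from the affine flag variety to the affine Grassmannian and there apply the known dimension formula. Write $K = G(\kk[[\e]])$ and $\Grass = G(L)/K$, and let $\pi\colon \Flag = G(L)/I \to \Grass$ be the natural projection. It is a Zariski-locally trivial fibration whose fibres are isomorphic to $K/I \cong G/B^-$, the finite flag variety, of dimension $\ell(w_0)$.

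First I would record the relevant combinatorial data. Since $\mu$ is dominant we have $t^\mu \in {}^S\tW$, so in the normal form of Section~\ref{sec:defW'} the element $x = w_0 t^\mu$ is written as $v t^\mu w$ with $v = w_0$ and $w = 1$; hence $\ell(x) = \ell(w_0) + \ell(t^\mu)$ and $\eta(x) = w v = w_0$. Using $\ell(t^\mu) = \langle \mu, 2\rho\rangle = 2\langle\rho,\mu\rangle$, where $\rho$ is the half-sum of the positive roots, this gives
\[
d(x) = \tfrac12\big(\ell(w_0) + \ell(t^\mu) + \ell(w_0) - \defect(b)\big) = \ell(w_0) + \langle\rho,\mu\rangle - \tfrac12\defect(b).
\]

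Next I would show $\pi(X_x(b)) \subseteq X_\mu(b)$, where $X_\mu(b) = \{gK;\ g\i b\s(g) \in K t^\mu K \}$ is the affine Deligne-Lusztig variety in $\Grass$. Indeed $K = \bigsqcup_{u\in W} IuI$ and $w_0 t^\mu \in W t^\mu W$, so $I w_0 t^\mu I \subseteq K t^\mu K$; thus $g\i b\s(g) \in IxI$ forces $g\i b\s(g) \in K t^\mu K$, i.e.\ $gK \in X_\mu(b)$. Since every fibre of $\pi$ has dimension $\ell(w_0)$, every fibre of the induced map $X_x(b) \to X_\mu(b)$ has dimension at most $\ell(w_0)$, and therefore $\dim X_x(b) \le \dim X_\mu(b) + \ell(w_0)$. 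We may assume $X_x(b)\ne\emptyset$ (otherwise there is nothing to prove), hence $X_\mu(b)\ne\emptyset$, and the dimension formula for affine Deligne-Lusztig varieties in the affine Grassmannian (\cite{GHKR1}) gives $\dim X_\mu(b) = \langle\rho, \mu - \nu_b\rangle - \tfrac12\defect(b)$, where $\nu_b$ is the (dominant) Newton point of $b$. Combining these,
\[
\dim X_x(b) \le \langle\rho,\mu\rangle - \langle\rho,\nu_b\rangle - \tfrac12\defect(b) + \ell(w_0) = d(x) - \langle\rho,\nu_b\rangle \le d(x),
\]
the last step because $\rho$ is a dominant weight and $\nu_b$ a dominant coweight, so $\langle\rho,\nu_b\rangle\ge 0$.

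I do not expect a genuine obstacle here: the argument is a clean reduction to the affine Grassmannian. The only points needing care are the inclusion $I w_0 t^\mu I \subseteq K t^\mu K$ and checking that the cited dimension formula for $X_\mu(b)$ is available for the (not necessarily basic) $b$ under consideration — both standard. Moreover only the upper bound $\dim X_\mu(b) \le \langle\rho,\mu-\nu_b\rangle - \tfrac12\defect(b)$ is actually used, so one may invoke the weaker half of the Grassmannian result if preferred.
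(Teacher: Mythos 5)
Your proof is correct and follows essentially the same route as the paper: project from $\Flag$ to $\Grass$, note that $X_x(b)$ lands in $\pi^{-1}(X_\mu(b))$, and invoke the affine Grassmannian dimension formula together with the fact that the fibres of $\pi$ have dimension $\ell(w_0)$. The only difference is cosmetic: the paper's proof uses that $b$ is basic to drop the $\langle\rho,\nu_b\rangle$ term, whereas you keep it and observe $\langle\rho,\nu_b\rangle\ge 0$, which in fact handles arbitrary $b$ and matches the generality actually claimed in the lemma.
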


\begin{proof}
By the dimension formula for affine Deligne-Lusztig varieties in the affine Grassmannian (see \cite{GHKR1}, \cite{Viehmann-dim}), we have
\[
\dim X_\mu(b) = \langle \rho,\mu-\nu_b\rangle -\frac12\defect(b),
\]
where $X_\mu(b)$ denotes the affine Deligne-Lusztig variety in the affine Grassmannian, and $\nu_b$ denotes the (dominant) Newton vector of $b$. Since $b$ is basic, its Newton vector is central and hence does not actually contribute anything. On the other hand, denoting by $\pi\colon \Flag\rightarrow\Grass$ the projection, we have
\[
\pi^{-1}(X_\mu(b)) = \bigcup_{x\in Wt^\mu W} X_x(b).
\]
Therefore, for all $w_1, w_2\in W$
\begin{align*}
\dim X_{w_1t^\mu w_2}(b)& \le \dim X_\mu(b) + \dim (G/B) = \langle \rho,\mu\rangle -\frac12\defect(b)+\ell(w_0)\\
& = \frac 12( \ell(w_0t^\mu) + \ell(w_0) -\defect(b) ) = d(w_0t^\mu).
\end{align*}
\end{proof}

To prove the lower bound under the additional assumptions in Theorem~\ref{thm2} (1), we reduce to an element of the finite Weyl group. In fact, the following lemma (for $\tau={\rm id}$) shows that it suffices to prove that $x\Rightarrow c$ for some element $c\in W$. If $\eta(x)$ is a Coxeter element, then this follows immediately from Theorem~\ref{thm3}~(4). On the other hand, suppose $x\in\tW'$ and $\supp(\eta(x))=S$. We apply Theorem~\ref{thm3}~(3). If the coweight $\lambda$ is $=0$, then we are done. Otherwise, we can use Theorem~\ref{thm3}~(2) to see that there exists a dominant coweight $\l$ and a Coxeter element $c\in W$ such that $x \Rightarrow t^\l c$. Writing $t^\lambda c = v_1t^\nu v_2$ with $t^\nu v_2\in {^S}\tW$, i.~e.~$\eta_2(t^\lambda c)=v_1$, we have $c=v_1v_2$ and $\eta(t^\lambda c)=v_2v_1$. Since $c=v_1 v_2$ is simply the decomposition into an element of $W_{I(\mu)}$ and an element of ${}^{I(\mu)}W$, we have $\ell(v_1)+\ell(v_2)=\ell(c)$ and since $c$ is a Coxeter element, $v_2 v_1$ is also a Coxeter element of $W$. Therefore $x \Rightarrow t^\l c \Rightarrow v_2 v_1$ (using Theorem~\ref{thm3}~(4)).

\begin{lemma}
Let $\t \in \widetilde{W}$ with $\ell(\t)=0$. Let $J \subset S$ with $\t(J)=J$. Then for any $w \in W_J$, $\dim X_{w \t}(\t)=\ell(w)$. 
\end{lemma}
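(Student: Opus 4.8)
I want to compute $\dim X_{w\tau}(\tau)$ for $\tau$ of length $0$ with $\tau(J)=J$ and $w\in W_J$. Since $\tau$ has length $0$, multiplication by $\tau$ is an automorphism of the extended affine Weyl group preserving lengths, and more importantly $X_{w\tau}(\tau) = \{gI;\ g^{-1}\tau\s(g)\in Iw\tau I\}$. The plan is to change variables to reduce this to a Deligne--Lusztig variety for the finite group $G$ over $\kk$, where the classical theorem of Deligne and Lusztig (\cite{DL}, Theorem 1.6 and its corollaries) gives the dimension as $\ell(w)$ for $w$ in the finite Weyl group.

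**Reducing to the finite situation.**
First I would observe that because $\tau$ normalizes $I$ and $\s(\tau)=\tau$ (both being consequences of $\ell(\tau)=0$ and $\tau$ coming from the torus normalizer up to the $\s$-stable structure), the condition $g^{-1}\tau\s(g)\in Iw\tau I$ can be rewritten. Setting $h = g$ and using $Iw\tau I = Iw I\tau$ (valid since $\ell(w\tau)=\ell(w)$, so the product of Iwahori double cosets behaves well, cf.\ the properties in subsection~\ref{subsec2.4}(1)), one gets $g^{-1}\tau\s(g)\tau^{-1}\in IwI$. Now $g\mapsto \tau\s(g)\tau^{-1}$ is a new Frobenius-type automorphism $\s'$ of $G(L)$ of the same kind, and $X_{w\tau}(\tau)\cong \{gI;\ g^{-1}\s'(g)\in IwI\} = X_w(1)$ computed with respect to $\s'$. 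Since $w\in W_J\subset W$ and $\s'$ restricts to an automorphism of the finite group $G$ (the reduction mod $\e$), the Iwahori double coset $IwI$ for $w$ of finite type maps the whole problem into the finite flag variety $G/B^-$: the affine Deligne--Lusztig variety $X_w(1)$ for $w\in W$ is, via the projection $\Flag\to G/B$, isomorphic to the classical Deligne--Lusztig variety $X(w)\subset G/B$ associated to $w$ and the Frobenius $\s'$.

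**Applying the classical theorem.**
For the classical Deligne--Lusztig variety $X(w)$ with $w\in W_J$ and $\s'$ a Frobenius whose action on $W$ preserves $J$, Deligne and Lusztig show (or it follows from the reduction method, Corollary~\ref{Lemma-DLReduction} applied inside the finite Weyl group) that $X(w)$ is non-empty of pure dimension $\ell(w)$. The point where $\tau(J)=J$ enters is exactly that $\s'$ must stabilize $J$ so that $w$ stays in a $\s'$-stable parabolic; equivalently the condition $\tau(J)=J$ guarantees that $w\tau$ and $\tau$ lie in the same connected component and that the reduction procedure stays within $W_J$. I would then either cite \cite{DL} directly, or give the short induction: if $w=1$ then $X(1)$ is a single point, dimension $0$; if $\ell(w)>0$ write $w=sw_1$ with $s$ simple in $J$, and Corollary~\ref{Lemma-DLReduction}(2) (transported to the finite setting, or re-derived via Proposition~\ref{generalized-DL-reduction}) expresses $X_w(1)$ as $X_1\sqcup X_2$ fibered with one-dimensional fibers over $X_{sws}(1)$ and $X_{sw_1}(1)$ respectively; by induction both have dimension $\ell(w)-1$, so $\dim X_w(1)=\ell(w)$.

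**Main obstacle.**
The genuinely delicate step is the first one: verifying carefully that the change of variables $g\mapsto g\tau^{-1}$ (as in the Lemma just before, $X_x(b)\cong X_{\tau x\tau^{-1}}(b)$) combined with absorbing $\tau$ into the Frobenius is legitimate, i.e.\ that $\s' = \mathrm{Ad}(\tau)\circ\s$ is again a Frobenius of the right type on $G(L)$ and that it descends to the finite reductive group with $J$ stable. One must also check that $IwI$ for $w$ in the finite Weyl group, intersected with the relevant orbit, genuinely recovers the classical Deligne--Lusztig variety rather than something larger in $\Flag$ — this uses that for $w\in W$ the fibers of $\Flag\to G/B$ contribute nothing because the relative-position condition forces $g$ and $\s'(g)$ into the same fiber structure. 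Once that identification is pinned down, invoking \cite{DL} is routine.
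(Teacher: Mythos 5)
Your approach is genuinely different from the paper's: the paper simply cites two structural results from He's earlier work (\cite[Lemma~9.7]{He} giving $\dim X_{w\tau}(\tau)=\dim X_\tau(\tau)+\ell(w)$, and \cite[Prop.~10.3]{He} giving $\dim X_\tau(\tau)=0$), while you try to reduce directly to classical Deligne--Lusztig theory by absorbing $\tau$ into a twisted Frobenius $\sigma'=\mathrm{Ad}(\tau)\circ\sigma$. That idea is sound, and the first step (rewriting $X_{w\tau}(\tau)=\{gI:\ g^{-1}\sigma'(g)\in IwI\}$) is correct, since $\tau$ normalizes $I$ and $Iw\tau I = IwI\tau$.

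The reduction step, however, has a genuine gap. There is no projection $\Flag=G(L)/I\to G/B$; the only natural maps are $G(L)/I\to G(L)/P$ for parahoric subgroups $P\supseteq I$. The one you implicitly want to use is $G(L)/I\to G(L)/K$ with $K=G(\kk[[\e]])$, whose fibers $K/I$ identify with the finite flag variety. But this map is useful only if $\sigma'$ descends to $G(L)/K$, i.e.\ if $\tau$ normalizes $K$ --- and a nontrivial length-zero element $\tau$ does \emph{not} normalize $K$ (it moves the special vertex $0$ of the extended Dynkin diagram). So for $\tau\ne 1$ your stated reduction fails, and the actual role of the hypothesis $\tau(J)=J$ is not what you describe (it has nothing to do with connected components, which are automatic since $w\in W_a$). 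The correct version of your argument goes through the parahoric $P_J = IW_JI$: since $w\in W_J$, the condition $g^{-1}\sigma'(g)\in IwI$ forces $g^{-1}\sigma'(g)\in P_J$, and $\sigma'$ \emph{does} preserve $G(L)/P_J$ because $\tau(J)=J$. The $\sigma'$-fixed locus in $G(L)/P_J$ is discrete (by Lang's theorem for the pro-algebraic group $P_J$), and over each fixed point the fiber $P_J/I\cong L_J/B_J$ carries the classical Deligne--Lusztig variety of the Levi $L_J$ for the induced Frobenius, which is nonempty of dimension $\ell(w)$. Hence $X_{w\tau}(\tau)$ is a discrete disjoint union of such pieces --- not a single classical DL variety, as you claim --- but still of dimension $\ell(w)$. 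With these corrections the argument works, and it is an attractive alternative to citing \cite{He}; as written, though, the identification via a nonexistent map $\Flag\to G/B$ and the false isomorphism $X_w(1)\cong X(w)$ would need to be repaired.
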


\begin{proof}
By \cite[Lemma 9.7]{He}, $\dim X_{w \t}(\t)=\dim X_{\t}(\t)+\ell(w)$. By \cite[Prop 10.3]{He}, $\dim X_{\t}(\t)=0$. So $\dim X_{w \t}(\t)=\ell(w)$. 
\end{proof}

Under the assumption in Theorem~\ref{thm2} (2), we have that $\t=1$ or $0<r<n$ and that $\t$ is the length $0$-element that corresponds to the $r$-th fundamental coweight of $G$. The case that $\t=1$ is included in Theorem~\ref{thm2} (1). So we only need to consider the latter case. Similarly to the proof above, we have that $x \Rightarrow t^\l c$ for some Coxeter element $c$ of $W$. Let $m=\gcd(n, r)$. Then by \cite[Prop 6.7 (2)]{He}, $t^\l c \tilde \to (1 2 \cdots m) \t$. Hence by Corollary~\ref{cor-reduction1} and the Lemma above, \begin{align*} \dim X_{t^\l c} (\t) & \ge \dim X_{(1 2 \cdots m) \t}(\t)+\frac{1}{2}(\ell(t^\l c)-\ell((1 2 \cdots m) \t)) \\ &=\frac{1}{2}(\ell(t^\l c)+m-1).\end{align*} 

Since $c$ is a Coxeter element, $\eta(t^\l c)$ is also a Coxeter element of $W$. We also have that $\defect(\t)=n-m$. Thus $d(t^\l c)=\frac{1}{2}(\ell(t^\l c)+n-1-\defect(\t))=\frac{1}{2}(\ell(t^\l c)+m-1) \le \dim X_{t^\l c} (\t)$. So we obtain $d(x) \le \dim X_{x}(\t)$.




Therefore it remains to prove Theorem~\ref{thm3}. This is the goal of the following sections.

\subsection{Reduction of virtual dimension}

\begin{lemma}\label{X3}
If $x, x'\in\tW$ such that $x\to x'$ and $\ell(\eta(x))=\ell(\eta(x'))$, then $x\Rightarrow x'$.
\end{lemma}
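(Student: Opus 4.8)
The plan is to deduce the lemma almost directly from Corollary~\ref{cor-reduction1}, the only extra input being the elementary observation that the hypothesis $\ell(\eta(x))=\ell(\eta(x'))$ forces the difference of virtual dimensions $d(x)-d(x')$ to equal $\frac12(\ell(x)-\ell(x'))$, which is exactly the quantity governing the reduction step.

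First I would check that the assertion $x\Rightarrow x'$ is well-posed, i.e. that $x$ and $x'$ lie in the same $W_a$-coset. This is preserved under each of the two elementary moves that constitute $x\to x'$: conjugation by a length-$0$ element $\t$ sends $\k(x)$ to $\k(\t)\k(x)\k(\t)\i=\k(x)$ since $\tW/W_a$ is abelian; and conjugation by a simple affine reflection does not change $\k$ at all, since $\tS\subset W_a$. Hence $\k(x)=\k(x')$, and moreover $x$ and $x'$ lie in the same connected component of $G(L)$.

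Now fix $b\in G(L)$ (in the same connected component as $x$, hence as $x'$). If $X_{x'}(b)=\emptyset$, then $\dim X_{x'}(b)-d(x')=-\infty$ and the inequality $\dim X_x(b)-d(x)\ge \dim X_{x'}(b)-d(x')$ holds trivially. Otherwise, Corollary~\ref{cor-reduction1} gives $X_x(b)\ne\emptyset$ and
\[
\dim X_x(b)-\dim X_{x'}(b)\ \ge\ \tfrac12\bigl(\ell(x)-\ell(x')\bigr).
\]
On the other hand, by the definition of $d$ and the hypothesis $\ell(\eta(x))=\ell(\eta(x'))$,
\[
d(x)-d(x')=\tfrac12\bigl(\ell(x)-\ell(x')+\ell(\eta(x))-\ell(\eta(x'))\bigr)=\tfrac12\bigl(\ell(x)-\ell(x')\bigr).
\]
Subtracting, $\dim X_x(b)-d(x)\ge \dim X_{x'}(b)-d(x')$, which is precisely $x\Rightarrow x'$.

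I expect no real obstacle here: the substance of the lemma is entirely contained in Corollary~\ref{cor-reduction1}, and the present statement merely repackages it in terms of the virtual dimension. The only point requiring a moment's care is that $\ell$ and $\ell\circ\eta$ enter the formula for $d$ with the same coefficient $\tfrac12$, so that the assumption $\ell(\eta(x))=\ell(\eta(x'))$ exactly cancels the $\eta$-contribution and leaves the bound supplied by the reduction method.
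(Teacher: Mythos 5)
Your proof is correct and takes essentially the same route as the paper: observe that $\ell(\eta(x))=\ell(\eta(x'))$ makes $d(x)-d(x')=\tfrac12(\ell(x)-\ell(x'))$, and then invoke Corollary~\ref{cor-reduction1}. The extra remarks you include (well-posedness of $x\Rightarrow x'$, explicit handling of the case $X_{x'}(b)=\emptyset$) are sound and merely spell out points the paper leaves implicit.
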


\begin{proof}
Since $\ell(\eta(x))=\ell(\eta(x'))$, we have $d(x)-d(x')=\frac 12 (\ell(x)-\ell(x'))$. The Lemma now follows immediately from Corollary~\ref{cor-reduction1}.
\end{proof}

\begin{lemma}\label{Lemma-VirtualDim2'}
Let $x\in \widetilde{W}$. Let $s\in S$ be a simple reflection such  that $\ell(sxs) = \ell(x) - 2$. 
Then
\[
d(x) \ge d(sx) + 1,
\]
and equality holds if and only if $\ell(\eta(sx)) = \ell(\eta(x))-1$.
\end{lemma}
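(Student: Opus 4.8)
The plan is to analyze how passing from $x$ to $sx$ changes the two quantities $\ell$ and $\ell\circ\eta$ that make up $d$. Since $\ell(sxs)=\ell(x)-2$, we have in particular $sx<x$, so $\ell(sx)=\ell(x)-1$. Hence $\ell(x)-\ell(sx)=1$, and to prove $d(x)\ge d(sx)+1$ it suffices to show that $\ell(\eta(x))\ge\ell(\eta(sx))-1$, with the desired equivalence being that $d(x)=d(sx)+1$ iff $\ell(\eta(x))=\ell(\eta(sx))-1$. So everything reduces to the single inequality
\[
\ell(\eta(sx)) \le \ell(\eta(x)) + 1,
\]
and the claim that equality in Lemma~\ref{Lemma-VirtualDim2'} is exactly equality here. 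Indeed, $d(x)-d(sx)=\tfrac12\big((\ell(x)-\ell(sx))+(\ell(\eta(x))-\ell(\eta(sx)))\big)=\tfrac12\big(1+(\ell(\eta(x))-\ell(\eta(sx)))\big)$; this is $\ge 1$ iff $\ell(\eta(x))-\ell(\eta(sx))\ge 1$... wait, that would force $\ell(\eta(sx))\le\ell(\eta(x))-1$, which is too strong. Let me reconsider: since $\eta$ lands in the finite Weyl group $W$ and $x\mapsto sx$ is multiplication by a simple \emph{affine} reflection, the change in $\ell(\eta(\cdot))$ need not be bounded by $1$ on the nose; the honest statement to chase is $\ell(\eta(sx))\le\ell(\eta(x))+1$, which gives $d(x)-d(sx)\ge\tfrac12(1-1)=0$ — still not enough. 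The resolution must be that one can do better, namely $\ell(\eta(sx))\le\ell(\eta(x))-1$ is false in general but some parity/combinatorial constraint forces $d(x)-d(sx)$ to be an \emph{integer} $\ge 1$; since $d(x)-d(sx)\in\tfrac12\ZZ$ and we will show it is $\ge\tfrac12$ and also an integer (because $\ell(x)$ and $\ell(\eta(x))$ have a fixed parity relation within a fixed $W_a$-coset, as $\ell(x)+\ell(\eta(x))$ is constant mod $2$ there), it must be $\ge 1$. I would spell this parity observation out first.

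With the parity reduction in hand, the core is to bound $\ell(\eta(sx))-\ell(\eta(x))$. Write $x=vt^\mu w$ with $\mu$ dominant, $v\in W$, $w\in{}^{I(\mu)}W$, so $\eta(x)=wv$. I would split into cases according to whether $s=s_0$ (the affine simple reflection) or $s=s_i\in S$, and in each case track what left-multiplication by $s$ does to the decomposition $x=vt^\mu w$. When $s=s_i\in S$ and $s_i v<v$, left multiplication by $s_i$ simply changes $v$ to $s_i v$ keeping $t^\mu w$ fixed, so $\eta(sx)=w s_i v$, and $\ell(wsv)-\ell(wv)\in\{-1,+1\}$ by the basic Bruhat-order fact recalled in the excerpt — this already forces $d(x)-d(sx)\in\{0,1\}$, hence $=1$ by parity, with equality iff $\ell(\eta(sx))=\ell(\eta(x))+1$... but I need the equality criterion to read $\ell(\eta(sx))=\ell(\eta(x))-1$. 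Hmm — so in this sub-case $\ell(sx)=\ell(x)-1$ indeed needs $s_iv<v$, but then the constraint $\ell(sxs)=\ell(x)-2$ means $s$ also shortens on the right, which puts a further condition; I would use that to pin down the sign. The remaining, more delicate case is $s=s_0=t^{\theta^\vee}s_\theta$, where left multiplication mixes the translation and finite parts; here I would use the normal form and the length formula $\ell(vt^\mu w)=\ell(v)+\ell(t^\mu)-\ell(w)$ together with $s_0 x<x$ to compute the new normal form and read off $\eta(s_0 x)$.

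The main obstacle I anticipate is precisely this $s=s_0$ case together with getting the equality criterion stated in the lemma (involving $\ell(\eta(sx))=\ell(\eta(x))-1$) to come out with the correct sign. My strategy for that is to avoid a brute-force case chase by instead proving the two bounds $d(x)\ge d(sx)$ and "$d(x)\le d(sx)+1$ with a Bruhat-type equality condition" separately, then combining with the parity argument to upgrade $\ge$ to $\ge+1$; the equality analysis then only needs to be carried out at the boundary, where one of $\ell(\eta(sx))=\ell(\eta(x))\pm1$ holds and the half-integer arithmetic $d(x)-d(sx)=\tfrac12(1+(\ell(\eta(x))-\ell(\eta(sx))))$ directly gives: $=1$ iff $\ell(\eta(x))-\ell(\eta(sx))=1$, i.e. iff $\ell(\eta(sx))=\ell(\eta(x))-1$. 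So in fact once I show $\ell(\eta(x))-\ell(\eta(sx))\in\{-1,+1\}$ always (the heart of the matter, to be done by the case analysis above using only the recalled Bruhat facts and the normal form), the lemma follows: $d(x)-d(sx)\in\{0,1\}$, it is an integer by parity hence it is forced — no, it could be $0$; but $d(x)-d(sx)=0$ would mean $\ell(\eta(sx))=\ell(\eta(x))+1$, and I will argue this cannot happen when $\ell(sxs)=\ell(x)-2$, because $s$ shortening on both sides of $x$ forces, via the normal form, $\eta(sx)$ to be no longer than $\eta(x)$. That last implication is the one subtle point I would need to verify carefully.
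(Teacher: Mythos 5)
There is a genuine gap, and also a misreading of the hypotheses. First, the lemma posits $s\in S$, a \emph{finite} simple reflection; the case $s=s_0$ that you flag as ``the remaining, more delicate case'' is excluded by assumption, so that branch of the analysis is moot. Second, your assertion that $\ell(wsv)-\ell(wv)\in\{-1,+1\}$ ``by the basic Bruhat-order fact recalled'' is false: $wsv=wv\,(v^{-1}sv)=wv\,s_\beta$ where $s_\beta$ is a reflection but not in general a simple one, so the length difference is odd but can have absolute value $3,5,\dots$ (e.g.\ $W=S_3$, $w=v=s_1$, $s=s_2$, where $\ell(wsv)-\ell(wv)=3$). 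The recalled fact only says $wvs_\beta>wv$ iff $wv(\beta)>0$; it gives the sign, not the magnitude. Third, and most importantly, you correctly identify the crux --- that one must show $\ell(\eta(sx))<\ell(\eta(x))$ when $s$ shortens $x$ on both sides --- but you explicitly defer it (``that last implication is the one subtle point I would need to verify carefully''), so the proof is incomplete precisely at the step that carries the content of the lemma.

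For comparison, the paper's proof is short and does not need any parity/integrality argument. Write $x=vt^\mu w$ with $t^\mu w\in{}^S\tW$, so $\eta(x)=wv$ and $\eta(sx)=wsv$. From $sx<x$ and $s\in S$ one gets $sv<v$. From $xs<x$ one gets $ws>w$ (if $ws<w$ then $t^\mu ws\in{}^S\tW$ and $\ell(xs)=\ell(v)+\ell(t^\mu ws)=\ell(x)+1$, a contradiction). Let $\alpha$ be the simple root for $s$ and set $\beta=v^{-1}(-\alpha)$, which is positive since $sv<v$. Then $wv(\beta)=w(-\alpha)<0$ since $ws>w$, hence $\eta(sx)=wsv=wv\,s_\beta<wv=\eta(x)$. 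This gives $\ell(\eta(x))-\ell(\eta(sx))\ge 1$, hence $d(x)-d(sx)=\tfrac12\bigl(1+\ell(\eta(x))-\ell(\eta(sx))\bigr)\ge 1$, with equality exactly when $\ell(\eta(sx))=\ell(\eta(x))-1$. If you want to salvage your parity reduction, note it does show $d(x)-d(sx)\in\ZZ$; but you would still need the sign $\ell(\eta(sx))<\ell(\eta(x))$ --- the Bruhat computation above --- to rule out $d(x)-d(sx)=0$, so the parity step buys you nothing here.
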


\begin{proof}
We write $x$ as $v t^\mu w$ with $v, w \in W$ and $\mu$ a dominant coweight such that $t^\mu w \in {}^S \tW$. Then $\eta(x) = wv$.

Since $s x<x$, we must have that $s v<v$. If $w s<w$, then $t^\mu w s \in {}^S \tW$ and $x s>x$, which is a contradiction. Therefore $w s>w$. Let $\alpha$ denote the simple root
corresponding to $s$, and write $\beta = v^{-1}(-\alpha)$, which is a positive
root because $sv<v$. We then have $wv(\beta) = w(-\alpha) < 0$ (since
$ws>w$), and obtain
\[
\eta(sx) = wsv = wvs_\beta < wv,
\]
as desired.
\end{proof}

Similarly, 

\begin{lemma}\label{Lemma-VirtualDim2''}
Let $x=v t^\mu w$ with $v, w \in W$ and $\mu$ a dominant coweight such that $t^\mu w \in {}^S \tW$. Let $s\in S$ be a simple reflection such that $\ell(sxs) = \ell(x) - 2$ and suppose that $t^\mu w s \in {}^S \tW$ or $w=1$. 
Then
\[
d(x) \ge d(xs) + 1,
\]
and equality holds if and only if $\ell(\eta(xs)) = \ell(\eta(x))-1$.
\end{lemma}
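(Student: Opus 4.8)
The plan is to mirror the proof of Lemma~\ref{Lemma-VirtualDim2'}, swapping the roles of left and right multiplication by $s$. As there, write $x = v t^\mu w$ with $t^\mu w \in {}^S\tW$ so that $\eta(x) = wv$, and recall that $d(x) - d(xs) = \tfrac12(\ell(x) - \ell(xs)) + \tfrac12(\ell(\eta(x)) - \ell(\eta(xs))) = 1 + \tfrac12(\ell(\eta(x)) - \ell(\eta(xs)))$, using $\ell(xs) = \ell(x) - 1$ (which holds since $\ell(sxs) = \ell(x) - 2$ forces both $sx < x$ and $xs < x$). Hence the inequality $d(x) \ge d(xs) + 1$ together with the equality condition is exactly the claim that $\ell(\eta(xs)) \le \ell(\eta(x))$ with equality iff $\ell(\eta(xs)) = \ell(\eta(x))$, i.e. that $\eta(xs) \le \eta(x)$ in the sense that right-multiplying $\eta(x)$ by the relevant reflection decreases length; so the whole content is to show $\eta(xs) = \eta(x) s_\gamma$ for a suitable positive root $\gamma$ with $wv(\gamma) < 0$.

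First I would pin down $\eta(xs)$. Since $xs < x$ and $sx < x$, and we assume $t^\mu w s \in {}^S\tW$ (or $w = 1$, in which case $\mu$ must be regular — actually $w=1$ makes the computation trivial since then $xs = v s\, t^{s\mu} $... let me instead just treat the generic case $t^\mu w s \in {}^S\tW$, handling $w=1$ as a degenerate subcase separately). When $t^\mu w s \in {}^S\tW$, the element $xs = v t^\mu (ws)$ is already in the normal form $v' t^{\mu}w'$ with $v' = v$, $w' = ws$, so $\eta(xs) = (ws)v = wsv$... wait, that is wrong: $\eta(xs) = w' v' = (ws) v$. Hmm, but that would not obviously be a length-decreasing modification of $wv$. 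Let me reconsider: $xs$ has normal form with the same $v$ and with $w$ replaced by $ws$, provided $ws \in {}^{I(\mu)}W$, which is what $t^\mu ws \in {}^S\tW$ encodes. So $\eta(xs) = (ws)v$. Comparing to $\eta(x) = wv$: $\eta(xs) = w s v = (wv)(v^{-1}sv) = \eta(x)\, s_{v^{-1}\alpha}$ where $\alpha$ is the simple root of $s$. So $\gamma := v^{-1}\alpha$ (up to sign; choose the positive representative), and I must show $wv(\gamma) < 0$, equivalently $\eta(x) s_\gamma < \eta(x)$.

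The key computation: from $sx < x$ we get $sv < v$, hence $v^{-1}(\alpha) < 0$, so the positive root is $\gamma = -v^{-1}\alpha = v^{-1}(-\alpha) > 0$. Then $wv(\gamma) = w v v^{-1}(-\alpha) = w(-\alpha)$. Now I use $xs < x$ to control $w(-\alpha)$: we have $\ell(xs) < \ell(x)$ while (in the case at hand) $ws > w$, and since $\ell(x) = \ell(v) + \ell(t^\mu) - \ell(w)$ with $t^\mu ws \in {}^S\tW$ giving $\ell(xs) = \ell(v) + \ell(t^\mu) - \ell(ws) = \ell(x) - 1$ automatically — so that relation alone does not yet give the sign of $w(-\alpha)$. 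The extra input must come from $xs < x$ more carefully, or rather from the hypothesis $\ell(sxs) = \ell(x)-2$ combined with $s(xs) = sxs$ having length $\ell(xs) - 1$: applying Lemma~\ref{Lemma-VirtualDim2'}-type reasoning, or directly: the expected statement $\eta(xs) < \eta(x)$ should follow because $w(-\alpha) < 0 \iff w\alpha > 0 \iff ws > w$, which is precisely our standing assumption in this branch. So $wv(\gamma) = w(-\alpha) < 0$, giving $\eta(xs) = \eta(x)s_\gamma < \eta(x)$, as needed.

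The main obstacle I anticipate is bookkeeping around the degenerate hypothesis ``$t^\mu ws \in {}^S\tW$ or $w = 1$'': when $w = 1$ but $t^\mu s \notin {}^S\tW$ one cannot read off the normal form of $xs$ so naively, and one must instead argue that $\mu$ being acted on forces $\ell(t^\mu s) = \ell(t^\mu) - 1$ with $xs = v t^{s^{-1}\mu \text{(something)}}$ and track how $\eta$ changes — I would handle this by noting $w=1$ means $x = v t^\mu$ with $t^\mu \in {}^S\tW$, so $I(\mu) = \emptyset$ is not forced, but $\langle \mu, \alpha_j\rangle$ could vanish; the cleanest route is to observe that if $w=1$ and $xs < x$ then $s$ must be ``absorbed'' into the translation part only if $\langle\mu,\alpha\rangle = 0$, but then $\ell(xs) = \ell(x) - 1$ fails unless $v s < v$ accounts for it, i.e. one reduces again to the same reflection identity $\eta(xs) = \eta(x) s_{v^{-1}\alpha}$. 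I would write the argument uniformly via the identity $\eta(xs) = \eta(x) s_{v^{-1}\alpha}$ and verify the length drop through $w\alpha > 0$, citing the Bruhat-order criterion recalled before Lemma~\ref{prelim-lemma}, and dispatch $w=1$ with a one-line remark that the same formula holds since then $ws = s > 1 = w$ trivially.
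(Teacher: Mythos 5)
Your argument for the main branch ($t^\mu w s \in {}^S\tW$) is correct and is exactly the mirror of the paper's proof of Lemma~\ref{Lemma-VirtualDim2'}: from $xs<x$ and $t^\mu ws\in{}^S\tW$ one gets $ws>w$; from $sx<x$ one gets $sv<v$, hence $\beta=v^{-1}(-\alpha)>0$; then $\eta(xs)=(ws)v=\eta(x)s_\beta$ and $\eta(x)\beta=w(-\alpha)<0$, so $\ell(\eta(xs))\le\ell(\eta(x))-1$, which together with $\ell(xs)=\ell(x)-1$ gives the claim and the equality criterion. (Since the paper dispatches this lemma with ``Similarly,'' this is precisely the intended adaptation.)

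However, the ``one-line remark'' you propose for the degenerate branch $w=1$ is wrong. When $w=1$ and $s\in I(\mu)$ (the only situation where $t^\mu w s\notin{}^S\tW$ can occur alongside $w=1$), we have $t^\mu s=st^\mu$, so $xs=vst^\mu$ with normal form $v'=vs$, $w'=1$; hence
\[
\eta(xs)=w'v'=vs,
\]
whereas your asserted identity $\eta(xs)=\eta(x)s_{v^{-1}\alpha}$ would give $v\cdot v^{-1}sv=sv$. These are genuinely different elements in general --- for instance with $G$ of type $A_2$, $v=s_1s_2s_1$, $s=s_1$, $\mu$ with $\langle\mu,\alpha_1\rangle=0$, one gets $\eta(xs)=s_1s_2$ but $\eta(x)s_{v^{-1}\alpha}=s_2s_1$. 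So the reflection identity you want to ``write the argument uniformly'' around does not hold in this branch, and the claim that ``$ws=s>1=w$ trivially'' keeps the same formula valid is a genuine gap: $ws>w$ is irrelevant here because $xs$ does \emph{not} have normal form $v t^\mu(ws)$.

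The fix is short but is a different argument: in this branch $\ell(xs)=\ell(vs)+\ell(t^\mu)$ and $xs<x$ force $vs<v$, so $\ell(\eta(xs))=\ell(vs)=\ell(v)-1=\ell(\eta(x))-1$ directly (equality always holds here). You should state this case separately rather than claim the reflection formula extends.
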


These results about the virtual dimension imply

\begin{lemma}\label{X2}
Let $x\in \tW$, $s\in S$ such that $\ell(sxs)<\ell(x)$. Then 
\begin{enumerate}
\item 
If $\ell(\eta(sx))=\ell(\eta(x))-1$, then $x\Rightarrow sx$.
\item
If $\ell(\eta(xs))=\ell(\eta(x))-1$, then $x\Rightarrow xs$.
\end{enumerate}
\end{lemma}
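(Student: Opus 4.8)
The plan is to derive both parts from the Deligne--Lusztig reduction, Corollary~\ref{Lemma-DLReduction}, together with an elementary computation of how the virtual dimension $d$ changes. The first thing I would do is pin down the relevant lengths. Conjugation of an element of $\tW$ by a simple reflection changes its length by $0$ or $\pm 2$, so the assumption $\ell(sxs)<\ell(x)$ forces $\ell(sxs)=\ell(x)-2$. Writing $sxs$ both as $(sx)s$ and as $s(xs)$ and comparing lengths then forces $\ell(sx)=\ell(xs)=\ell(x)-1$; in particular $sx<x$ and $xs<x$. Moreover, directly from the definition of $d$, for any $z,z'$ in the $W_a$-coset of $x$ we have
\[
d(z)-d(z')=\frac12\bigl(\ell(z)-\ell(z')\bigr)+\frac12\bigl(\ell(\eta(z))-\ell(\eta(z'))\bigr).
\]
Hence the hypothesis of (1) gives $d(x)=d(sx)+1$ and the hypothesis of (2) gives $d(x)=d(xs)+1$; these are precisely the equality assertions of Lemmas~\ref{Lemma-VirtualDim2'} and~\ref{Lemma-VirtualDim2''}.

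For part (1): since $\ell(sxs)=\ell(x)-2$ with $sx<x$, Corollary~\ref{Lemma-DLReduction}~(2) presents $X_x(b)$ as a disjoint union $X_1\sqcup X_2$ with $X_2$ open, together with a surjective morphism $X_2\to X_{sx}(b)$ that is a composition of a Zariski-locally trivial fiber bundle with one-dimensional fibers and a universal homeomorphism. If $X_{sx}(b)=\emptyset$, then $X_2=\emptyset$ and the inequality in the definition of $x\Rightarrow sx$ holds by the $-\infty$ convention. Otherwise $\dim X_x(b)\ge\dim X_2=\dim X_{sx}(b)+1$; combining with $d(x)=d(sx)+1$ yields $\dim X_x(b)-d(x)\ge\dim X_{sx}(b)-d(sx)$, i.e.\ $x\Rightarrow sx$.

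For part (2): the one extra ingredient is that $X_{sx}(b)$ and $X_{xs}(b)$ have the same dimension and are simultaneously empty or non-empty. This follows because $s(sx)s=xs$ and $\ell(s(sx)s)=\ell(xs)=\ell(sx)$, so Corollary~\ref{Lemma-DLReduction}~(1) provides a universal homeomorphism $X_{sx}(b)\to X_{xs}(b)$ (equivalently, $sx\,\tilde\approx\,xs$ and one applies Corollary~\ref{cor-reduction1} in both directions). Now the reduction used in part (1) gives, when $X_{xs}(b)\ne\emptyset$,
\[
\dim X_x(b)\ge\dim X_{sx}(b)+1=\dim X_{xs}(b)+1,
\]
the inequality being vacuous otherwise; together with $d(x)=d(xs)+1$ this gives $x\Rightarrow xs$.

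The whole argument is bookkeeping, so there is no serious obstacle: the only points that need a moment's attention are that $\ell(sxs)<\ell(x)$ really does place us in the length-drops-by-two case of Corollary~\ref{Lemma-DLReduction}, that the maps $X_2\to X_{sx}(b)$ and $X_{sx}(b)\to X_{xs}(b)$ are surjective so that dimension and non-emptiness transfer correctly, and that the empty cases are absorbed by the convention $\dim\emptyset=-\infty$ built into the relation $\Rightarrow$.
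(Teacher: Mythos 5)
Your proof is correct and takes essentially the same approach as the paper: part (1) via the open piece $X_2$ in Corollary~\ref{Lemma-DLReduction}~(2) combined with the computation $d(x)=d(sx)+1$, and part (2) via the identity $xs=s(sx)s$ and the universal homeomorphism from Corollary~\ref{Lemma-DLReduction}~(1). The only minor difference is that you compute $d(x)-d(sx)$ directly from the definition of $d$ rather than citing Lemma~\ref{Lemma-VirtualDim2'}, which is equally valid.
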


\begin{proof}
For (1), we simply use the Deligne-Lusztig reduction (where we consider $X_2$ in Corollary~\ref{Lemma-DLReduction}~(2)), and Lemma~\ref{Lemma-VirtualDim2'}. For part~(2), we first use the Deligne-Lusztig reduction from $x$ to $sx$ as in the first case. Then we use Corollary~\ref{Lemma-DLReduction}~(1) to reduce to $xs=s(sx)s$ which has the same length as $sx$. Altogether we see that if $X_{xs}(b)\ne\emptyset$, then $X_x(b)\ne\emptyset$, and then $\dim X_x(b)-\dim X_{xs}(b) \ge 1 = d(x)-d(xs)$.
\end{proof}

\subsection{Proof of Theorem~\ref{thm3}~(1)} We write $x = vt^\mu w$. 
First consider the case $v=\eta_2(x)=w_0$. Then we have that $w_0 t^\mu \Rightarrow x = w_0 t^\mu w$ because we can successively apply Lemma~\ref{X2}~(2).

Now we consider the case that $\mu$ is regular. 
Since $\mu$ is regular, $t^\mu wvw_0\in {^S}\tW$, so we can apply the ``$\eta_2=w_0$''-case to the element $w_0 t^\mu wvw_0$ and obtain that $w_0t^\mu\Rightarrow w_0 t^\mu wvw_0$. 
Because $\mu$ is regular, Lemma~\ref{X3} shows that 
\[ 
w_0 t^\mu wvw_0 = w_0 v\i  (vt^\mu w) vw_0 \Rightarrow vt^\mu w = x. 
\]

\subsection{Proof of Theorem~\ref{thm3}~(2)}

We prove the following stronger result:

Let $J \subset S$ and $x=v t^\mu w$ with $v,w\in W$, $\supp(v)=J$, $w$ is a Coxeter element in $W_{S-J}$, $\mu \neq 0$ and $t^\mu w \in {}^S \tW$. Then there exists a Coxeter element $c$ of $W$ such that $v t^\mu w \Rightarrow t^\mu c$. 

We proceed by induction on $|J|$. Suppose that the statement is true for all $J'\subsetneq J$, but not true for $J$. We may also assume that the claim of the proposition is true for all $v'$ with support $\supp(v')=J$ and $\ell(v')<\ell(v)$. Let $v=s_{i_1} \cdots s_{i_k}$ be a reduced expression. 

If $t^\mu w s_{i_1} \in {}^S \tW$, then $\ell(t^\mu w s_{i_1})=\ell(t^\mu)-\ell(w s_{i_1})=\ell(t^\mu)-\ell(w)-1=\ell(t^\mu w)-1$ and $\ell(s_{i_1} v t^\mu w s_{i_1})=\ell(s_{i_1} v)+\ell(t^\mu w s_{i_1})=\ell(s_{i_1} v)+\ell(t^\mu w)-1=\ell(v t^\mu w)-2$. By Lemma~\ref{X2}~(1), $v t^\mu w\Rightarrow s_{i_1} v t^\mu w$. If $\supp(s_{i_1} v)=J$, then by induction, there exists a Coxeter element $c$ of $W$ such that $s_{i_1} v t^\mu w \Rightarrow t^\mu c$. Hence $v t^\mu w\Rightarrow t^\mu c$. That is a contradiction.  

Now suppose that $t^\mu w s_{i_1} \in {}^S \tW$, but $\supp(s_{i_1}v)\subsetneq J$. In that case, we have $\ell(s_{i_1}vt^\mu ws_{i_1}) = \ell(vt^\mu w)-2$. So $vt^\mu w\Rightarrow s_{i_1}vt^\mu ws_{i_1}$ by Lemma~\ref{X3}. That is also a contradiction by induction hypothesis.  

Now we can assume that $t^\mu w s_{i_1} \notin {}^S \tW$. Then we have that $t^\mu w s_{i_1}=s_{i'_1} t^\mu w$ for some $i'_1 \in S$. So $w s_{i_1} w \i=t^{-\mu} s_{i'_1} t^\mu$ is a reflection in $W$. So $t^\mu$ commutes with $s_{i'_1}$ and $w s_{i_1} w \i=s_{i'_1}$ is a simple reflection. By our assumptions on $w$, it follows that $i'_1=i_1$ and $t^\mu w$ commutes with $s_{i_1}$. In this case, if $\ell(s_{i_1} v s_{i_1})<\ell(v)$, then $\supp(s_{i_1} v)=J$ and using Lemma~\ref{X2}~(1) and the induction hypothesis as in the first case, we again have that  $v t^\mu w \Rightarrow t^\mu c$ for some Coxeter element $c$ of $W$, which is a contradiction. 

Therefore we must have that $s_{i_1}$ commutes with $t^\mu w$ and $\ell(s_{i_1} v s_{i_1})=\ell(v)$. In this case, $v t^\mu w \approx s_{i_1} v s_{i_1} t^\mu w$. So $\dim X_{v t^\mu w} (b)=\dim X_{s_{i_1} v s_{i_1} t^\mu w} (b)$ by Corollary~\ref{Lemma-DLReduction}~(1). We also have $d(v t^\mu w)=d(s_{i_1} v s_{i_1} t^\mu w)$, because $\eta(s_{i_1} v s_{i_1} t^\mu w) = ws_{i_1}vs_{i_1}$ has length $\ell(w)+\ell(s_{i_1}vs_{i_1})$ (use that $w$ is a Coxeter element in $W_{S-J}$). Applying the same argument to $s_{i_1} v s_{i_1} t^\mu w$ instead of $v t^\mu w$, we have that $s_{i_2}$ commutes with $t^\mu w$. Repeating the same procedure, one can show that $s_{i_j}$ commutes with $t^\mu w$ for all $1 \le j \le k$. In particular, $s_k$ commutes with $w$ for all $k \in J$. Since $G$ is quasi-simple, this is only possible if $J=\emptyset$ or $J=S$. If $J=\emptyset$, then $v=1$ and $w$ is a Coxeter element of $W$ and the statement automatically holds. If $J=S$, then $s_i$ commutes with $t^\mu$ for all $i \in S$. Thus $\mu=0$, which contradicts our assumption.

\subsection{Proof of Theorem~\ref{thm3}~(3)}

Let $x=v t^\mu w \in \tW'$ with $\mu \in Y_+$, $v \in W$, $w \in {}^{I(\mu)} W$. We first give the definition of the elements $\g$ and $a$ that we use. Let $J=\{i \in S; s_i w<w\}$. Since $w \in {}^{I(\mu)} W$, $J \cap I(\mu)=\emptyset$. Hence $\mu-\rho_J^\vee \in Y_+$. By definition, $\<\rho^\vee_J, \a_i\>-\d_{w \i \a_i}=0$ for any $i \in S$. Since $x\in\tW'$, we obtain $\<\mu-\rho_J^\vee, \a_i\>+\d_{v \a_i} \neq 0$ for any $i \in S$. 

Let $J'=I(\mu-\rho^\vee_J)$. Then $v \a_i<0$ for any $i \in J'$. Thus $v=v' w^0_{J'}$ for some $v' \in W^{J'}$. Here $w^0_{J'}$ is the largest element in $W_{J'}$. Now $w v=(w v') w^0_{J'}=w' z$ for some $w' \in W^{J'}$ and $z \in W_{J'}$. Define $\g \in Y_+$ and $y \in W^{I(\g)}$ by $\mu-\rho^\vee_J+(w') \i \rho^\vee_J =y \g$. Furthermore, we define $a = (y\i z) * (w' y)$. It has support $\supp(a) = S$ since $S = \supp(wv) \subseteq \supp(w' y) \cup \supp(y\i z)$.

We show that 

\textbf{(a)} $\ell(w' y)=\ell(w')-\ell(y)$.

Let $\a \in \Phi^+$ with $y \i \a<0$. Then $\<\g, y \i \a\> \le 0$. If $\<\g, y \i \a\>=0$, then $y \i \a \in \Phi^-_{I(\g)}$ and $\a=y (y \i \a) \in \Phi^-$. That is a contradiction. Hence $\<\mu-\rho^\vee_J+(w') \i \rho^\vee_J, \a\>=\<y \g, \a\>=\<\g, y \i \a\><0$. Since $\mu-\rho^\vee_J \in Y_+$, $\<\mu-\rho^\vee_J, \a\> \ge 0$. Thus $\<\rho^\vee_J, w'\a\>=\<(w')\i \rho^\vee_J, \a\> <0$ and $w' \a<0$. Since $w' \a<0$ for any $\a \in \Phi^+$ with $y \i \a<0$, Lemma~\ref{prelim-lemma} shows that $\ell(w' y)=\ell(w')-\ell(y)$. (a) is proved. 

Now set $x_1=v z \i t^{\mu-\rho^\vee_J} y$ and $x_2=y \i z t^{\rho^\vee_J} w$. Then $x=x_1 x_2$ and we claim that 

\textbf{(b)} $\ell(x)=\ell(x_1)+\ell(x_2)$.

By the proof of (a), $y \in {}^{J'} W$. In fact, if for any $j \in J'$, $y^{-1} \alpha_j<0$, then by the proof of (a), $w' \alpha_j<0$, which contradicts that $w' \in W^{J'}$. Hence $t^{\mu-\rho^\vee_J} y \in {}^S \tW$ and $\ell(x_1)=\ell(v z \i)+\ell(t^{\mu-\rho^\vee_J})-\ell(y)=\ell(t^{\mu-\rho^\vee_J})+\ell(v)-\ell(z)-\ell(y)$. Also $\ell(x_2)=\ell(y \i z)+\ell(t^{\rho^\vee_J})-\ell(w)=\ell(t^{\rho^\vee_J})+\ell(y)+\ell(z)-\ell(w)$. Thus $\ell(x_1)+\ell(x_2)=\ell(t^{\mu-\rho^\vee_J})+\ell(t^{\rho^\vee_J})+\ell(v)-\ell(w)=\ell(t^\mu)+\ell(v)-\ell(w)=\ell(x)$. (b) is proved. 

Now
\[
X_{x}(b)= \{ \xymatrix{ g \ar[r]^-x & b \sigma(g) } \}=\{  \xymatrix{ g \ar[r]^-{x_1} & g_1 \ar[r]^-{x_2} & b \sigma(g) }\}.
\]
Set
\begin{align*}
X_1= &  \{ \xymatrix{ g_1 \ar[r]^-{x_2} & g_2 \ar[r]^-{x_1} & b \sigma(g_1) } \} \\
\cong & \{ \xymatrix{ g_1 \ar[r]^-{y\i z} &  g_3 \ar[r]^-{t^{\rho^\vee_J} w} & g_2 \ar[r]^-{x_1} & b \sigma(g_1)  } \}.
\end{align*}
The map $(g, g_1) \mapsto (g_1, b \sigma(g))$ is a universal homeomorphism from $X_{x}(b)$ to $X_1$.
Let 
\[
X_2=\{ \xymatrix{  g_1 \ar[r]^-{y\i z} &  g_3 \ar[r]^-{t^{\rho^\vee_J} w} \ar@/_2pc/[rr]_-{w' y t^{\g}} & g_2 \ar[r]^-{x_1} & b \sigma(g_1)  } \} \subset X_1.
\]
Then we have that $\dim(X_{x}(b)) \ge \dim(X_2)$.

Now let
\[
X_3=\{ \xymatrix{ g_1  \ar[r]^-{y\i z} & g_3  \ar[r]^-{w' y t^\g} & b \s(g_1) }  \},
\]
and let $f: X_2 \to X_3$ be the projection map. Notice that $w' y t^\g=t^{\rho^\vee_J} w x_1$. Thus by \ref{generalized-DL-reduction} (1),
the map is surjective and each fiber is of dimension $\frac{\ell(t^{\rho^\vee_J} w)+\ell(x_1)-\ell(w' y t^\g)}{2}=\frac{\ell(x)-\ell(y \i z)-\ell(w' y)-\ell(t^\g)}{2}$. Hence

\textbf{(c)} $\dim(X_{x}(b)) \ge \dim(X_3)+\frac{\ell(x)-\ell(y \i z)-\ell(w' y)-\ell(t^\g)}{2}$.

Notice that
\[
X_3=\{ \xymatrix{ g_1 \ar[r]^-{y\i z} & g_3 \ar[r]^-{w' y} &  g_4 \ar[r]^-{t^\g} & b \s(g_1)  }\}.
\]
Recall that $a=(y \i z)*(w' y)$. We set
\begin{align*}
X_4=& \{ \xymatrix{ g_1 \ar[r]^-{y\i z} \ar@/_2pc/[rr]_-{a} & g_3 \ar[r]^{w' y} &  g_4 \ar[r]^-{t^\g} & b \s(g_1)  } \}, \\
X_5=& \{ \xymatrix{ g_1 \ar[r]^-a & g_4 \ar[r]^-{t^\g} & b \s(g_1) } \}.
\end{align*}

By \ref{generalized-DL-reduction} (2), $\dim(X_3) \ge \dim(X_4) \ge \dim(X_5)+\ell(y \i z)+\ell(w' y)-\ell(a)$. As we proved above, $\ell(y \i z)+\ell(w' y)=\ell(w')+\ell(z)=\ell(w v)$. Therefore $\ell(y \i z)+\ell(w' y)+\frac{\ell(x)-\ell(y \i z)-\ell(w' y)-\ell(t^\g)}{2}=\frac{\ell(x)+\ell(w v)-\ell(t^\g)}{2}$. So 

\textbf{(d)} $\dim(X_{x}(b)) \ge \dim(X_5)+\frac{\ell(x)+\ell(w v)-\ell(t^\g)}{2}-\ell(a)$. 

Notice that $\ell(a t^\g)=\ell(a)+\ell(t^\g)$. Thus the map $(g_1, g_4) \mapsto g_1$ gives an isomorphism $X_5 \cong X_{a t^\g}(b)$. 

If $X_{a t^\g}(b)\ne \emptyset$, then we obtain that $X_{x}(b)\ne\emptyset$, and
\[
\dim X_{x}(b) \ge \dim X_{at^\g}(b)  + d(x) - d(a t^\g),
\]
i.e., $x \Rightarrow a t^\g$.

\begin{example}
Let us see what the elements $a$, $\gamma$ are in the following two special cases:
\begin{enumerate}
\item
If $v = 1$, then the assumption that $x\in \tW'$ implies that $J'=\emptyset$. Therefore $w'=w$, $z=y=1$, so that $\g =\mu$ and $a=w$. The result in this case is that $t^\mu w\Rightarrow w t^\mu$.
\item
If $\mu$ is ``very regular'', then $I(\mu) = I(\mu - \rho_J^\vee + (w')\i \rho_J^\vee) = \emptyset$, so that $a = w' = wv$. In this case we obtain $v t^\mu w \Rightarrow wv t^\g$.
\end{enumerate}
\end{example}

\subsection{Proof of Theorem~\ref{thm3}~(4)}

Since $\eta(x)$ is a Coxeter element of $W$, by \cite[Prop 6.7 (1)]{He}, we have that $x \tilde \to \eta(x)$. By Lemma \ref{X3}, $x \Rightarrow \eta(x)$.

\section{Remarks on the critical strips}

\subsection{A sharpened criterion for non-emptiness}

We have seen that the non-emptiness of $X_x(b)$ for $b$ basic and $x\in \tW'$ can be decided by looking at $\eta(x) = \eta_2(x)^{-1}\eta_1(x)\eta_2(x)$. In fact, if $\supp(\eta(x))\ne S$ and the translation part of $x$ is non-trivial in the sense that it is different from the Newton vector of $b$, then $X_x(b)=\emptyset$ (\cite[Proposition 9.5.4]{GHKR2}). If $x\in \tW\setminus \tW'$, then the converse is not true anymore, but one could ask whether it would help to check whether $\supp(\eta')=S$ for additional elements $\eta'\in W$ (depending on $x$), and specifically one could try to replace $\eta_2(x)$ by a different element of $W$. The following proposition gives a result in this direction, using the notion of $P$-alcove introduced in \cite{GHKR2}. In the proposition, $\eta_2(x)$ is replaced by an element of the form $s_\alpha\eta_2(x)$, where $\alpha$ depends on $x$. Recall that whenever $x$ is a $P$-alcove sufficiently far away from the origin, then $X_x(b)=\emptyset$.

Of course, the proposition is of particular interest, if ${^x}I \cap U_\alpha = I \cap U_\alpha$ (in particular $x\notin\tW'$, and more specifically, $x$ lies in the ``critical strip'' attached to $\alpha$).

\begin{prop}
Let $x=vt^\mu w\in \widetilde{W}$, $w\in {}^{I(\mu)}W$, let $\alpha$ be a finite root such that ${^x}I \cap U_\alpha \subseteq I \cap U_\alpha$, and assume that $-v^{-1}\alpha$ is a simple root. If there exists $j\in S$ such that
\[
(s_\alpha v)^{-1} \eta_1(x) (s_\alpha v) \in  W_{S\setminus \{ j \}},
\]
then $x$ is a $P$-alcove for $P = {}^{s_\alpha v}P_0$, $P_0=M_0 N_0$ the standard parabolic subgroup whose Levi component $M_0$ is generated by $S\setminus \{ j \}$.
\end{prop}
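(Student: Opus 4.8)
The plan is to unwind the definition of $P$-alcove from \cite{GHKR2} and verify the two defining conditions for $x$ with respect to the parabolic $P = {}^{s_\alpha v}P_0$. Recall that, by definition, $x = vt^\mu w$ is a $P$-alcove for $P = {}^p P_0 = {}^p(M_0N_0)$ (with $p \in W$, here $p = s_\alpha v$) precisely when (i) the finite part of $x$ lies in the finite Weyl group of the Levi, i.e. ${}^{p^{-1}}\eta_1(x) = p^{-1}\eta_1(x)p \in W_{M_0} = W_{S\setminus\{j\}}$, and (ii) a positivity condition on the translation part relative to the unipotent radical, namely ${}^xI \cap U_a \subseteq I \cap U_a$ for every affine root $a$ whose vector part lies in the root system of ${}^pN_0$ (equivalently $\mu$ is ``$N$-positive'' in the appropriate sense). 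Condition (i) is literally the hypothesis $(s_\alpha v)^{-1}\eta_1(x)(s_\alpha v) \in W_{S\setminus\{j\}}$, so the real content is checking (ii).

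For condition (ii), the key observation is that $p = s_\alpha v$ differs from $v$ by a single reflection $s_\alpha$, where $-v^{-1}\alpha$ is simple; so the root subsystem ${}^{s_\alpha v}N_0$ differs from ${}^vN_0$ in a very controlled way. I would argue as follows. Since $w \in {}^{I(\mu)}W$ we have $t^\mu w \in {}^S\widetilde{W}$, which gives the standard ``dominance'' control on ${}^{t^\mu w}I \cap U_\beta$ for finite roots $\beta$: for $\beta > 0$ one gets $\varepsilon^{\langle \mu,\beta\rangle}$-level shifts governed by the sign of $\langle \mu,\beta\rangle$, which is $\ge 0$. Conjugating by $v$ on top of this, the hypothesis ${}^xI \cap U_\alpha \subseteq I \cap U_\alpha$ is exactly the extra piece of information pinning down the behaviour along the one ``bad'' direction $\alpha$ that would otherwise obstruct $N_0$-positivity for the chamber ${}^vP_0$; replacing $v$ by $s_\alpha v$ flips precisely that direction, turning the obstruction into the desired containment. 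Concretely, I would check that for each affine root $a$ with vector part a root of ${}^{s_\alpha v}N_0$, either its $s_\alpha v$-conjugate back is a positive root of $N_0$ on which $\langle\mu,\cdot\rangle \ge 0$ gives the inclusion directly from $t^\mu w \in {}^S\widetilde W$, or it is the one root where $-v^{-1}\alpha$ being simple forces $\langle\mu, -v^{-1}\alpha\rangle$ small and the hypothesis ${}^xI\cap U_\alpha \subseteq I\cap U_\alpha$ supplies the needed containment.

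The main obstacle I anticipate is the bookkeeping in condition (ii): making precise how the sets ${}^xI \cap U_a$ transform under the decomposition $x = v \cdot t^\mu w$ and under replacing $v$ by $s_\alpha v$, and confirming that the single simple-root hypothesis on $-v^{-1}\alpha$ is exactly strong enough to handle the affine roots whose vector parts are $\alpha$ (or its ``affine translates'') while all other directions are automatically controlled by $t^\mu w \in {}^S\widetilde W$ together with the hypothesis on $\eta_1(x)$. Once the affine-root combinatorics is set up carefully, conditions (i) and (ii) should both drop out, and the remark about the case ${}^xI \cap U_\alpha = I \cap U_\alpha$ (so that $x$ is not forced into $\widetilde W'$) is then just the observation that the hypothesis is then an equality, placing $x$ in the critical strip attached to $\alpha$ rather than strictly on the positive side.
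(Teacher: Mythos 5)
Your plan has the right overall shape and matches the paper's strategy: condition (i) of the $P$-alcove definition is literally the hypothesis on $(s_\alpha v)^{-1}\eta_1(x)(s_\alpha v)$, and condition (ii) is to be checked using the dominance of $v^{-1}x=t^\mu w$ together with the extra hypothesis on the single root $\alpha$. In the paper's notation the dominance step is packaged as the inclusion ${}^{x}I\cap {}^v U\subset I$, obtained from ${}^{v^{-1}x}I\cap U\subset I$ and ${}^v(I\cap U)\subset I$; this gives ${}^x I\cap U_\beta\subset I$ for every finite root $\beta$ with $v^{-1}\beta>0$.

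However, what you have written is a plan rather than a proof, and the step you flag as ``bookkeeping'' is in fact where the argument lives, and it requires a case distinction that you have not identified. Write $-v^{-1}\alpha=\alpha_i$. The set of roots in $N$ is $R_N={}^{s_\alpha v}R_{N_0}={}^{vs_i}R_{N_0}$. There are two genuinely different cases depending on whether $i=j$ or $i\ne j$. If $i\ne j$, then $s_i\in W_{M_0}$ stabilizes $R_{N_0}$, so $R_N={}^vR_{N_0}\subset {}^v\Phi^+$ and \emph{every} root of $N$ is handled by the dominance inclusion; the hypothesis on $\alpha$ is not needed for this case at all. If $i=j$, then $\alpha_i\in R_{N_0}$, so $\alpha=vs_i\alpha_i\in R_N$ is the unique root in $R_N$ with $v^{-1}\alpha<0$, while for every other $\beta\in R_N$ one has $v^{-1}\beta=s_i\gamma>0$ (with $\gamma\in R_{N_0}\setminus\{\alpha_i\}$), so dominance handles them and the hypothesis ${}^xI\cap U_\alpha\subseteq I$ handles $\alpha$. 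Without this analysis it is not clear that the single hypothesis on $\alpha$ suffices, nor that it is even relevant. Also, your remark that ``$-v^{-1}\alpha$ being simple forces $\langle\mu,-v^{-1}\alpha\rangle$ small'' is a red herring: nothing of the sort is used, the containment at $\alpha$ is exactly the hypothesis, with no appeal to $\mu$.
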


\begin{proof}
As usual, we write $P=MN$ for the Levi decomposition of $P$, where $M$ is the Levi subgroup containing the fixed maximal torus. By definition of $P$, $x\in \tW_M$, so we have to show that ${}^x I\cap U_\beta \subseteq I$ for every root $\beta$ occurring in $N$. By assumption, $-v^{-1}\alpha$ is a simple root $\alpha_i$.

Denote by $U$ the unipotent radical of the fixed Borel subgroup of $G$. Since the alcove $v^{-1}x$ lies in the dominant chamber, we have ${}^{v^{-1}x}I\cap U \subseteq I$. Furthermore, by our normalization of $I$ with respect to the dominant chamber, we have ${}^v (I\cap U) \subset I$, so altogether we obtain
\begin{equation}\label{prop41-eq1}
{}^x I\cap {}^v U \subset I.
\end{equation}

The set $R_N$ of roots occurring in $N$ is
\[
R_N = \{ v s_i \gamma =  s_\alpha v \gamma;\  \gamma\text{ a root in $N_0$} \}.
\]
We distinguish two cases: If $i\ne j$, i.e., $s_i\in W_M$, then $s_i$ stabilizes the set of roots in $N_0$, and therefore $R_N$ is the set of roots in ${}^v N_0$. In this case our claim follows from~\eqref{prop41-eq1}.

Now let us consider the case $i=j$, so that $\alpha_i\in R_{N_0}$, and $\alpha = -v \alpha_i = v s_i \alpha_i\in R_N$. For all $\beta\in R_N\setminus \{ \alpha \}$, we have $v^{-1}\beta > 0$, so ${}^x I\cap U_\beta \subset I$ by \eqref{prop41-eq1}, and finally we have ${}^x I\cap U_\alpha \subset I$ by assumption.
\end{proof}

Even though the proposition yields examples of pairs $(x,b)$ for which $X_x(b)= \emptyset$ although $\supp(\eta(x))=S$, it does not give rise to a sufficient criterion for non-emptiness in the critical strips, as can be shown by examples for $G=SL_4$.

\section{An example of an affine Deligne-Lusztig variety which is not equidimensional}
\label{Example-non-equidim}

\subsection{}
Looking again at the reduction method of Deligne and Lusztig, Corollary~\ref{Lemma-DLReduction}, we see that the situation in the affine case, in the shrunken Weyl chambers, is very much different from the classical situation: Whereas in the classical situation we always have $\dim X_1 = \dim X-1$, $\dim X_2=\dim X$ (denoting by $X$ the pertaining Deligne-Lusztig variety), in the affine shrunken case for the expected dimensions we have $d(sxs)+1=d(x)$, $d(sx)+1 \le d(x)$ --- so the closed part $X_1$, if non-empty, always should have the same dimension as the affine Deligne-Lusztig variety itself, while the open part has at most this dimension. See the example below for a specific case where this inequality is strict and where one can produce examples of affine Deligne-Lusztig varieties which are not equidimensional.

\subsection{}
To give an example of a non-equidimensional affine Deligne-Lusztig variety, we again use Corollary~\ref{Lemma-DLReduction}~(2). Let $x = v t^\mu w\in W_a$ be an element with $\mu$ dominant and very regular, and let $s\in S$ such that
\begin{enumerate}
\item
$\ell(sv) < \ell(v)$, $\ell(ws)>\ell(w)$,
\item
$\ell(wsv)< \ell(wv)-1$,
\item
$\supp(wv)=\supp(wsv)=S$.
\end{enumerate}
By~(1), we have $\ell(sxs) = \ell(x)-2$. We also have $\eta(x) =\eta(sxs)= wv$, $\eta(sx) = wsv$. Therefore as in Corollary~\ref{Lemma-DLReduction}~(2), we write $X_x(1) = X_1\cup X_2$, where $X_1$ is of relative dimension $1$ over $X_{sxs}(1)$, and $X_2$ is of relative dimension $1$ over $X_{sx}(1)$.

By the main result and assumption~(3), we have
\[
\dim X_2 = \dim X_{sx}(1) +1 = d(sx) + 1 < d(x) = \dim X_{x}(1),
\]
where the $<$ in the second line holds because of~(2) and Lemma~\ref{Lemma-VirtualDim2'}. (We also see that $\dim X_1=\dim X$.) Since $X_2$ is open in $X_x(1)$ and has strictly smaller dimension, $X_x(1)$ cannot be equidimensional.

It remains to find elements $v$, $w$, and $s$ that satisfy~(1)--(3). But this is easy, for instance for type $A_3$, we can take
\[
v = s_1s_2, \qquad w = s_1s_2s_3 s_2, \qquad s = s_1,
\]
so that
\[
wv = s_1s_2s_3s_2 s_1s_2,\qquad wsv = s_1s_2s_3.
\]

\end{document}